\newcommand{\der}{\partial}
\newcommand{\g}{\mathfrak{g}}
\newcommand{\A}{\mathcal{A}}
\newcommand{\One}{\mathbbm{1}}
\newcommand{\gen}[1]{\ensuremath{\langle #1\rangle}}
\newcommand{\Mat}[1]{\text{Mat}_{#1}(\C)}
\newcommand{\Der}{\operatorname{Der}}
\renewcommand{\Re}{\operatorname{Re}}
\newcommand{\tr}{\operatorname{tr}}
\newcommand{\R}{\mathbb{R}}
\newcommand{\C}{\mathbb{C}}
\theoremstyle{definition}
\newtheorem{definition}{Definition}[section]
\newtheorem{lemma}[definition]{Lemma}
\newtheorem{proposition}[definition]{Proposition}
\newtheorem{theorem}[definition]{Theorem}
\newtheorem{corollary}[definition]{Corollary}
\newtheorem{example}[definition]{Example}
\title{Projective Real Calculi and Levi-Civita connections}
\author{Axel Tiger Norkvist}
\begin{document}

\begin{abstract}
    Based on its central role in the framework of real calculi, the existence of the Levi-Civita connection for real calculi over projective modules is studied, with a special emphasis placed on the simple module $M=\C^N$ over $\A=\Mat{N}$. It is demonstrated that existence of the Levi-Civita connection in this case depends on the Lie algebra $\g$ of hermitian derivations, and necessary and sufficient conditions for the possibility of constructing a real calculus for which there exists a Levi-Civita connection are given in this case. Furthermore, in the general case of real calculi over projective modules, necessary and sufficient conditions for the existence of the Levi-Civita connection are given in terms of explicit projection coefficients.
\end{abstract}

\maketitle

\section{Introduction}\label{sec:intro}
In the rapidly advancing and conceptually rich field of noncommutative geometry, it is crucial to examine the subject matter from various angles to fully comprehend the strengths and weaknesses associated with different perspectives.  In this article, we shall consider the derivation-based approach of real calculi, where a metric and the module on which it acts as a bilinear form are explicitly given, drawing direct inspiration from classical Riemannian geometry.

Inspired by the work in \cite{r:leviCivita}, the framework of real calculi was introduced in \cite{aw:curvature.three.sphere} and \cite{aw:cgb.sphere} as a straightforward derivation-based approach to noncommutative geometry where an $\A$-module $M$ generated from a set of derivations on $\A$ acts as a noncommutative analogue of a vector bundle, on which a metric can be given as an explicit biadditive map on $M$. Since their inception, real calculi have been used to study various geometric aspects of classical noncommutative spaces using a noncommutative analogue of the Levi-Civita connection. For instance, the curvature was calculated for the noncommutative 3-sphere (\cite{aw:curvature.three.sphere}) and the noncommutative cylinder (\cite{al:noncommutative.cylinder}), and in \cite{atn:nc.minimal.embeddings} a minimal embedding of the noncommutative torus into the noncommutative 3-sphere was demonstrated. Moreover, a Gauss-Bonnet type theorem was proven for the noncommutative 4-sphere (\cite{aw:cgb.sphere}). After the introduction of real calculus homomorphisms in \cite{atn:nc.minimal.embeddings}, real calculi were also studied as algebraic structures in \cite{tn:rc.finite.noncomm.spaces} with a particular focus dedicated to projective modules and isomorphisms of real calculi over matrix algebras. 

The work in the aforementioned articles has showcased some of the virtues of real calculi as a way to study noncommutative geometry by highlighting examples where it is straightforward to find a Levi-Civita connection, however (as has been mentioned previously in the articles cited above) existence of the Levi-Civita connection is not guaranteed in general. Considering the fundamental theorem of (pseudo-)Riemannian geometry, questions of existence and uniqueness of the Levi-Civita connection is important in any framework of noncommutative Riemannian geometry, and it has been studied in the past from various different perspectives (for instance, see \cite{dvm:connections.central.bimodules}, \cite{ac:ncgravitysolutions}, \cite{bm:starCompatibleConnections}, \cite{r:leviCivita}, \cite{bgm:LC.connections.spectral.triple} and \cite{ail:LC.connections.q-spheres}).

The current article initiates the study of these questions in earnest for real calculi, focusing on finitely generated projective modules. As a main class of examples we consider the simple projective module $\C^N$ over $\A=\Mat{N}$, and necessary and sufficient conditions are given for when there exists a Levi-Civita connection given a Lie algebra $\g$ of derivations on $\A$. In particular it is shown that there exists no Levi-Civita connection in cases where $\g$ is a semisimple Lie algebra. Beyond this, in the general case a set of equations is derived to determine whether the Levi-Civita connection exists for a given real calculus where the module is projective.

The paper is structured as follows. In Section 2 we recall basic preliminaries of real calculi, and go into detail on the construction of real calculi over $\A=\Mat{N}$. Then, in Section 3 we consider the concept of a Levi-Civita connection, and introduce the notion of a pre-calculus as a collection of given data from which we wish to construct a real calculus. In Section 4 we consider the simple projective module $\C^N$ over $\A=\Mat{N}$, and investigate general pre-calculi of this form. It is worked out in some detail when a real calculus can be constructed from this data such that the Levi-Civita connection exists, and in Section 4.2 the discussion is generalized to $\C^N$ as a simple projective module over a general $^*$-algebra $\A$. Finally, in Section \ref{sec:gen.LC.conds} the existence of the Levi-Civita connection given a real calculus where the module is projective is discussed in some detail, and it is outlined how this question can be answered by considering projections of a free module of rank equal to the dimension of the Lie algebra of derivations.

\section{Preliminaries}\label{sec:prelims}
We begin by recalling the basic definitions and results regarding real calculi (see \cite{aw:curvature.three.sphere}) which make out the framework used throughout this article. Real calculi as a concept is a derivation-based approach to noncommutative geometry, where a module over an algebra is viewed as an analogue of a vector bundle over a manifold. 

If $\A=C^{\infty}(\Sigma)$ is the algebra of smooth functions on a differential manifold $\Sigma$, one may consider the set $\Der(\A)$ of derivations on $\A$ as the module $\mathfrak{X}(\Sigma)$ of smooth sections of the tangent bundle $T\Sigma$, and use this to study various geometric aspects of $\Sigma$. 

However, when considering a noncommutative algebra $\A$, the set $\Der(\A)$ does not possess the structure of an $\A$-module in general. 
Therefore, in contrast to the classical scenario it is important to note that the relationship between derivations on $\A$ and elements of an $\A$-module cannot be expected to follow a straightforward one-to-one correspondence. Additionally, unlike the classical case, the set $\Der(\A)$ in the context of a noncommutative algebra $\A$ includes nontrivial inner derivations. Thus, when considering derivations on $\A$ as an analog to tangent vector fields on a manifold, it might be advisable to selectively examine a subset of $\Der(\A)$ rather than considering the entire set. With this in mind, we make the following definition (for a more in-depth discussion of the challenges with viewing derivations as a noncommutative analogue of vector fields mentioned above, see \cite{dvm:connections.central.bimodules}).
\begin{definition}[Real calculus]
	Let $\A$ be a unital $^*$-algebra over $\C$, and let $\g_{\pi}$ denote a real Lie algebra together with a faithful representation $\pi:\g\rightarrow\text{Der}(\A)$ such that $\pi(\der)$ is a hermitian derivation for all $\der\in\g$. Moreover, let $M$ be a right $\A$-module and let $\varphi:\g\rightarrow M$ be a $\R$-linear map such that $M$ is generated by $\varphi(\g)$. Then
	$C_{\A}=(\A,\g_{\pi},M,\varphi)$ is called a \textit{real calculus} over $\A$.
\end{definition}
\noindent
In the following we will write $\der(a)$ to denote the action of $\g$ on $\A$ as derivations instead of the more cumbersome $\pi(\der)(a)$ for an element $\der\in\g$ and $a\in\A$. Moreover, if $\g\subseteq \Der(\A)$ and the representation $\pi$ is left unspecified this is to be interpreted as $\pi$ being the identity map.

One should note that the framework of real calculi only considers real Lie algebras, and the concept of hermitian derivations corresponds to the notion of real tangent vector fields. In the context of a complex $^*$-algebra $\A$, a hermitian derivation $\der\in\Der(\A)$ is a derivation such that 
\begin{equation*}
    \der(a)=\der^*(a)=(\der(a^*))^*,\quad a\in\A.
\end{equation*}
Finally, in the context of real calculi all modules are right modules, and this is the case for all modules mentioned in this article.

Using the $\A$-module $M$ it is possible to define metrics as invertible biadditive maps $M\times M\rightarrow\A$ which take the $\A$-module structure of $M$ into account as well as the $^*$-structure of $\A$. Formally, the definition is as follows. 
\begin{definition}
    Let $\A$ be a $^*$-algebra and let $M$ be a right $\A$-module. A \textit{hermitian form} on $M$ is a map $h:M\times M\rightarrow\A$ with the following properties:
    \begin{itemize}
        \item[$(h1)$.] $h(m_1,m_2+m_3)=h(m_1,m_2)+h(m_1,m_3)$
        \item[$(h2)$.] $h(m_1,m_2a)=h(m_1,m_2)a$
        \item[$(h3)$.] $h(m_1,m_2)=h(m_2,m_1)^*$
    \end{itemize}
    Moreover, if the map $\hat{h}:M\rightarrow M^*$ (where $M^*$ denotes the dual of $M$), defined by 
    \begin{equation*}
        \hat{h}(m)(n)=h(m,n),
    \end{equation*}
    is a bijection then $h$ is said to be an \emph{(invertible) metric}.
\end{definition}

It is worth noting condition $(h3)$ in the above definition in particular. In classical Riemannian geometry one would usually require the metric to be fully symmetric (i.e., that $g(x,y)=g(y,x)$ for all $x,y\in M$) but this condition is generally too restrictive in the context of noncommutative algebras. Moreover, as $M$ is a conceptual analogue of a complex vector bundle over a smooth manifold it is reasonable to let a metric $h$ in the context of real calculi be the noncommutative analogue of a hermitian metric, hence condition $(h3)$. However, we can use the map $\varphi$ of a real calculus $(\A,\g_{\pi},M,\varphi)$ to consider elements of the form $\varphi(\der)\in M$ for $\der\in\g$ as representatives in $M$ of real tangent vector fields. This inspires a symmetry condition for the metric that only applies to elements of this form.

\begin{definition}[Real metric calculus]
    Let $C_{\A}=(\A,\g_{\pi},M,\varphi)$ be a real calculus over $\A$ and let $h$ be a metric on $M$. If
    \begin{equation*}
        h(\varphi(\der_1),\varphi(\der_2))^*=h(\varphi(\der_1),\varphi(\der_2))
    \end{equation*}
    for every $\der_1,\der_2\in\g$ then the pair $(C_{\A},h)$ is called a \textit{real metric calculus}.
\end{definition}

Next we consider affine connections. These are among the most basic objects in classical geometry and their noncommutative counterparts play an essential role in the noncommutative geometry of real calculi.
\begin{definition}
    Let $C_{\A}=(\A,\g_{\pi},M,\varphi)$ be a real calculus over $\A$. An \textit{affine connection} on $\g\times M$ is a map $\nabla:\g\times M\rightarrow M$ satisfying
    \begin{enumerate}
        \item $\nabla_{\der}(m+n)=\nabla_{\der}m+\nabla_{\der}n$
        \item $\nabla_{\lambda\der+\der'}m=\lambda\nabla_{\der}m+\nabla_{\der'}m$
        \item $\nabla_{\der}(ma)=(\nabla_{\der}m)a+m\der(a)$
    \end{enumerate}
    for $m,n\in M$, $\der,\der'\in\g$, $a\in\A$ and $\lambda\in\R$.
\end{definition}

\subsection{Real calculi over $\A=\Mat{N}$}\label{sec:matrix.alg.intro}
Matrix algebras are important to study both from a theoretical and a practical perspective, and they provide a rich collection of interesting examples to consider. The investigation of real calculi over $\A=\Mat{N}$ was initiated in \cite{tn:rc.finite.noncomm.spaces}, where the effect that the map $\varphi$ has on the algebraic structure of real calculi over $\A$ was studied in some detail, and the current article aims to explore these structures further from a geometric perspective, with a special focus on Levi-Civita connections. 

We begin by noting that all derivations on $\A$ are inner, and hence they can be represented as the commutator of a matrix, and it is straightforward to verify that a hermitian derivation $\der$ corresponds to the commutator $[D,\cdot]$ of a unique trace-free and antihermitian matrix $D$. Hence, let $D_1,...,D_n$ form a basis of $\g\subseteq \mathfrak{su}(N)$. We let $\pi:\g\rightarrow\Der(\A)$ be the  $\R$-linear map $\pi(D_i)=[D_i,\cdot]$ for $i=1,...,n$.

There are several choices of modules $M$ over $\A$ that can be used when constructing real calculi, and in this article the main focus will be on the module $M=\C^N$. Assuming that $N>1$, this module is projective but not free. Moreover, any nonzero vector $v\in\C^N$ generates $M$ as a module over $\A$, since any nonzero vector in $\C^N$ can be linearly transformed into any other vector in $\C^N$. Thus, if $\varphi:\g\rightarrow \C^N$ is any $\R$-linear map that is not the trivial map,
it is clear that $\varphi(\g)$ generates $M$ as an $\A$-module and hence $C_{\A}=(\A,\g_{\pi},\C^N,\varphi)$ is a real calculus.

Expanding on this,
let $h$ be a metric on $M=\C^N$. As described in \cite{tn:rc.finite.noncomm.spaces}, the general metric $h$ on $\C^N$ is of the form
\begin{equation*}
    h(u,v)=x\cdot u^{\dagger}v,\quad u,v\in\C^N
\end{equation*}
for $x\in\R\setminus\{0\}$. Hence, if $(C_{\A},h)$ is a real metric calculus, then the symmetry condition
\begin{equation*}
    h(\varphi(D_i),\varphi(D_j))=x\cdot \varphi(D_i)^{\dagger}\varphi(D_j)=x\cdot \varphi(D_j)^{\dagger}\varphi(D_i)=h(\varphi(D_i),\varphi(D_j))^{\dagger}
\end{equation*}
for $i,j\in\{1,...,n\}$ implies that there is a nonzero vector $\hat{v}_0\in\C^N$ and real constants $\mu_i$ such that 
\begin{equation}\label{eq:metric.anchor.map.CN}
    \varphi(D_i)=\mu_i \hat{v}_0,\quad i=1,...,n.
\end{equation}

To characterize connections on the module $\C^N$ we make use of the fact that $\nabla_i=\nabla_{D_i}$ is a linear map for all $i$, implying that there is a unique matrix $X_i$ such that $\nabla_i v=v X_i $ for all $v\in\C^N$. Using the Leibniz condition
\begin{equation*}
    (vA)X_i=\nabla_{i}(vA)=(\nabla_{i} v) A+v D_i(A)=(v X_i)A+v[D_i,A]
\end{equation*}
it follows that the matrix $X_i$ satisfies
\begin{equation*}
   vAX_i=vX_iA+v[D_i,A]\Leftrightarrow v[D_i+X_i,A]=0
\end{equation*}
for all $v\in\C^N$ and $A\in\Mat{N}$, and thus we see that $X_i=t_i\One-D_i$, where $t_i\in\C$ and $\One\in\A$ denotes the identity matrix. Explicitly, we have that
\begin{equation}\label{eq:affine.connection.CN}
    \nabla_i v=v(X_i)=t_i v-vD_i.
\end{equation}
We shall return to real calculi of the form $(\Mat{N},\g_{\pi},\C^N,\varphi)$ in Section 4, where necessary and sufficient conditions on $\g$ and $\varphi$ for the existence of a noncommutative analogue of the Levi-Civita connection from classical Riemannian geometry are given.

\section{Metric pre-calculi and anchor maps}
The fundamental theorem of Riemannian geometry states that there is a unique connection that is compatible with the Riemann metric as well as having vanishing torsion, and this connection is called the Levi-Civita connection. In the context of real metric calculi existence of such a connection is not always guaranteed, and as can be seen later in Section \ref{sec:fin.dim.modules} existence of a Levi-Civita connection for a given real metric calculus $\big((\A,\g_{\pi},M,\varphi),h\big)$ may depend not only on the algebraic structure of $\g$, but also on how elements of $\g$ act on $\A$ as derivations.

Before considering the notions of metric compatibility and torsion for affine connections $\nabla$ in the context of real calculi, 
we shall require an additional hermiticity condition for elements of the form $\nabla_{\der}\varphi(\der')$ which can be seen as a noncommutative version of taking the covariant derivative of a real vector field with respect to another real vector field.
\begin{definition}[Real connection calculus]\label{def:rcc}
    Let $(C_{\A},h)$ be a real metric calculus and let $\nabla$ be an affine connection on $\g\times M$. If $\nabla$ satisfies
    \begin{equation*}
        h(\nabla_{\der}\varphi(\der_1),\varphi(\der_2))=h(\nabla_{\der}\varphi(\der_1),\varphi(\der_2))^*
    \end{equation*}
    for every $\der,\der_1,\der_2\in\g$, then $(C_{\A},h,\nabla)$ is a \emph{real connection calculus}.
\end{definition}

Using the map $\varphi$ it is possible to define the torsion of a connection in analogy with the classical case.
\begin{definition}[Torsion of a connection]\label{def:torsion}
Let $C_{\A}$ be a real calculus and let $\nabla$ be an affine connection on $\g\times M$. The torsion $T_{\varphi}:\g\times\g\rightarrow M$ is defined as the $\R$-bilinear map satisfying
\begin{equation*}
    T_{\varphi}(\der_1,\der_2)=\nabla_{\der_1} \varphi(\der_2)-\nabla_{\der_2}\varphi(\der_1)-\varphi([\der_1,\der_2]),\quad \der_1,\der_2\in \g.
\end{equation*}
\end{definition}

\begin{definition}[Levi-Civita connection]
    Let $(C_{\A},h,\nabla)$ be a real connection calculus. We say that $\nabla$ is compatible with $h$ if
    \begin{equation*}
        \der(h(m,n))=h(\nabla_{\der}m,n)+h(m,\nabla_{\der}n)
    \end{equation*}
    for every $\der\in\g$ and $m,n\in M$, and \textit{torsion-free} if
    \begin{equation*}
        T_{\varphi}(\der_1,\der_2)=0,\quad \der_1,\der_2\in\g.
    \end{equation*}
    A metric and torsion-free connection is called a \emph{Levi-Civita} connection.
\end{definition}

As has been previously stated, in the general setup of real connection calculi the existence of a Levi-Civita connection can not be guaranteed. However, it is unique if it exists.

\begin{theorem}[\cite{aw:curvature.three.sphere}]\label{thm:existence.and.uniqueness.of.LC}
Let $(C_{\A},h)$ be a real metric calculus. Then there exists at most one Levi-Civita connection $\nabla$ such that $(C_{\A},h,\nabla)$ is a real connection calculus.$\hfill\square$
\end{theorem}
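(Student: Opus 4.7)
The plan is to mimic the classical Koszul-style uniqueness argument, but carefully tracking the $^*$-structure so that condition $(h3)$ does not ruin the usual symmetry/antisymmetry juggling. Suppose $\nabla$ and $\nabla'$ are two Levi-Civita connections for $(C_\A,h)$, and set $D=\nabla-\nabla'$. Because the Leibniz terms cancel, $D$ is right $\A$-linear in its second argument, i.e.\ $D_\der(ma)=(D_\der m)a$. Subtracting the metric compatibility identities for $\nabla$ and $\nabla'$ gives $h(D_\der m,n)+h(m,D_\der n)=0$, and subtracting the torsion-free identities gives $D_{\der_1}\varphi(\der_2)=D_{\der_2}\varphi(\der_1)$. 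Finally, since both $\nabla$ and $\nabla'$ make $(C_\A,h,\cdot)$ a real connection calculus, the quantity $h(D_\der\varphi(\der_1),\varphi(\der_2))$ is hermitian.

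Define the trilinear form
\begin{equation*}
T(\der_1,\der_2,\der_3)=h\big(D_{\der_1}\varphi(\der_2),\varphi(\der_3)\big).
\end{equation*}
From the subtracted metric-compatibility relation together with $(h3)$ we get $T(\der_1,\der_2,\der_3)=-T(\der_1,\der_3,\der_2)^*$, and the hermiticity from the real connection calculus condition collapses this to genuine antisymmetry $T(\der_1,\der_2,\der_3)=-T(\der_1,\der_3,\der_2)$. The torsion-free difference gives symmetry in the first two slots, $T(\der_1,\der_2,\der_3)=T(\der_2,\der_1,\der_3)$. The usual six-term permutation cycle then forces $T\equiv 0$.

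To finish, I use $\A$-linearity of $h$ in the second slot to extend: $h(D_{\der_1}\varphi(\der_2),\varphi(\der_3)a)=T(\der_1,\der_2,\der_3)a=0$. Since $\varphi(\g)$ generates $M$ as an $\A$-module, this gives $h(D_{\der_1}\varphi(\der_2),m)=0$ for every $m\in M$, and invertibility of the metric (the bijectivity of $\hat h$) then yields $D_{\der_1}\varphi(\der_2)=0$. Since $D_\der$ is $\A$-linear and $\varphi(\g)$ generates $M$, writing any $m=\sum_i \varphi(\der_i)a_i$ gives $D_\der m=\sum_i(D_\der\varphi(\der_i))a_i=0$, so $\nabla=\nabla'$.

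The only conceptually delicate step is the symmetrization argument: without the real connection calculus hypothesis one would only get the twisted identity $T(\der_1,\der_2,\der_3)=-T(\der_1,\der_3,\der_2)^*$, which mixes permutations of the arguments with the $^*$-involution on $\A$ and breaks the clean six-cycle. The precise role of Definition~\ref{def:rcc} in the framework is exactly to restore genuine antisymmetry so that the classical Koszul uniqueness mechanism survives the passage to noncommutative scalars; the rest is bookkeeping, and the generation and invertibility hypotheses on $(M,\varphi,h)$ are used only at the very end to promote the vanishing of $T$ on generators to the vanishing of $D$ on all of $M$.
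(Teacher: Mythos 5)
Your argument is correct, and since the paper itself gives no proof of this theorem (it is imported from \cite{aw:curvature.three.sphere}, with the Koszul formula of Proposition \ref{prop:koszul.formula} recording the same mechanism), there is nothing in the source to diverge from: your difference-tensor $T$ with symmetry in the first two slots, antisymmetry in the last two restored by the real-connection-calculus hermiticity, and the six-term cycle is exactly the standard Koszul uniqueness argument used there. The only points worth making explicit are that additivity of $h$ in the first slot (needed to subtract the two compatibility and hermiticity identities) follows from $(h1)$ and $(h3)$, and that $2T=0$ forces $T=0$ because $\A$ is a $\C$-algebra; both are routine.
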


Given a real connection calculus $(C_{\A},h,\nabla)$ where $\nabla$ is a Levi-Civita connection, the uniqueness result in Theorem \ref{thm:existence.and.uniqueness.of.LC} implies that $\nabla$ can (in principle) be recovered from the real metric calculus $(C_{\A},h)$. Moreover, as shall be demonstrated later in this article, one cannot assume that there exists a Levi-Civita connection for a given real metric calculus $(C_{\A},h)$. Hence, we make the following definition.
\begin{definition}
    Let $(C_{\A},h)$ be a real metric calculus. If there exists a Levi-Civita connection $\nabla$ such that $(C_{\A},h,\nabla)$ is a real connection calculus, then $(C_{\A},h)$ is called a \emph{pseudo-Riemannian calculus}.
\end{definition}

A noncommutative version of the classical Koszul formula can be used to determine whether a given connection $\nabla$ is the Levi-Civita connection satisfying the hermiticity condition in Definition \ref{def:rcc}. In the context of real connection calculi we state it as follows.

\begin{proposition}[\cite{aw:curvature.three.sphere}]\label{prop:koszul.formula}
Let $(C_{\A},h,\nabla)$ be a real connection calculus where $\nabla$ is a Levi-Civita connection, and assume that $\der_1,\der_2,\der_3\in\g$. Then $\nabla$ satisfies the equality
\begin{multline}\label{eqn:Koszul}
    2h(\nabla_{\der_1} e_2,e_3)=\der_1h(e_2,e_3)+\der_2h(e_1,e_3)-\der_3h(e_1,e_2)\\
    -h(e_1,\varphi([\der_2,\der_3]))+h(e_2,\varphi([\der_3,\der_1]))+h(e_3,\varphi([\der_1,\der_2])),
\end{multline}
where $e_i=\varphi(\der_i)$ for $i=1,2,3$.
Conversely, if $(C_{\A},h)$ is a real metric calculus and $\nabla$ is a connection satisfying Koszul's formula (\ref{eqn:Koszul}) for every $\der_1,\der_2,\der_3\in\g$, then $(C_{\A},h)$ is a pseudo-Riemannian calculus and $\nabla$ is the unique Levi-Civita connection such that $(C_{\A},h,\nabla)$ is a real connection calculus.$\hfill\square$
\end{proposition}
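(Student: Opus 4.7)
The plan is to adapt the standard Riemannian derivation of Koszul's formula to the noncommutative setting, and then to reverse each step for the converse.

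For the forward direction, expand $\der_1 h(e_2, e_3)$, $\der_2 h(e_1, e_3)$, and $\der_3 h(e_1, e_2)$ using metric compatibility and form the combination (1) + (2) $-$ (3). Substituting the torsion-free identities $\nabla_{\der_i} \varphi(\der_j) = \nabla_{\der_j} \varphi(\der_i) + \varphi([\der_i, \der_j])$ in the cross-terms produces a residue of the form $h(e_2, \nabla_{\der_3} e_1) - h(\nabla_{\der_3} e_1, e_2)$; by $(h3)$ this equals $h(\nabla_{\der_3} e_1, e_2)^* - h(\nabla_{\der_3} e_1, e_2)$, which vanishes precisely because $h(\nabla_{\der_3} e_1, e_2)$ is hermitian by Definition~\ref{def:rcc}. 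The remaining commutator terms reorganize into the right-hand side of (\ref{eqn:Koszul}) using antisymmetry of the Lie bracket and the symmetry $h(\varphi(\der), \varphi(\der')) = h(\varphi(\der'), \varphi(\der))$ inherent in the real metric calculus condition.

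For the converse, assume (\ref{eqn:Koszul}) and verify the three defining properties in turn. \emph{Hermiticity} (Definition~\ref{def:rcc}) follows by applying $*$ to both sides of (\ref{eqn:Koszul}): since $h(e_j, e_k)$ is hermitian and $\der_i$ is a hermitian derivation, each $\der_i h(e_j, e_k)$ is self-adjoint, and each term $h(e_i, \varphi([\der_j, \der_k]))$ is hermitian by the real-metric-calculus symmetry applied to the pair $(\varphi(\der_i), \varphi([\der_j, \der_k]))$. \emph{Torsion-freeness} follows by subtracting (\ref{eqn:Koszul}) at $(\der_2, \der_1, \der_3)$ from that at $(\der_1, \der_2, \der_3)$; most terms cancel, leaving $h(T_\varphi(\der_1, \der_2), e_3) = 0$ for all $\der_3 \in \g$. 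This extends to $h(T_\varphi(\der_1, \der_2), m) = 0$ for all $m \in M$ via $(h2)$ and the fact that $\varphi(\g)$ generates $M$, and invertibility of $h$ then forces $T_\varphi(\der_1, \der_2) = 0$. \emph{Metric compatibility} on $\varphi(\g) \times \varphi(\g)$ follows by adding (\ref{eqn:Koszul}) at $(\der_1, \der_2, \der_3)$ and $(\der_1, \der_3, \der_2)$: the $\der_2, \der_3$-terms and all commutator terms cancel, leaving $h(\nabla_{\der_1} e_2, e_3) + h(\nabla_{\der_1} e_3, e_2) = \der_1 h(e_2, e_3)$, and using $(h3)$ with the hermiticity just established converts the second summand into $h(e_2, \nabla_{\der_1} e_3)$. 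Extension to $M \times M$ is routine: write $m = \sum e_i a_i$ and $n = \sum e_j b_j$, then apply the Leibniz rule for $\nabla$, the properties $(h1), (h2)$, and the identity $\der(a^*) = \der(a)^*$.

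The computations are largely mechanical, and the principal obstacle is bookkeeping with the noncommutative structure: one must consistently apply the conjugate-linearity $h(ma, n) = a^* h(m,n)$ (a consequence of $(h2)$ and $(h3)$) throughout the extension from the generating set $\varphi(\g)$ to arbitrary elements of $M$, and track carefully which bracket terms are automatically hermitian versus which require an explicit invocation of the real-metric-calculus symmetry. Uniqueness of $\nabla$ is already supplied by Theorem~\ref{thm:existence.and.uniqueness.of.LC} and so need not be re-derived here.
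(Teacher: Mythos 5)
The paper states this proposition without proof, importing it from the cited reference, so there is no in-text argument to compare against; your derivation is the standard Koszul polarization argument that the citation refers to, and it is correct. All the key points are handled properly: the residue $h(e_2,\nabla_{\der_3}e_1)-h(\nabla_{\der_3}e_1,e_2)$ killed by the real-connection-calculus hermiticity together with $(h3)$, the hermiticity of $\der_i h(e_j,e_k)$ via the hermitian-derivation identity $\der(a)^*=\der(a^*)$, the passage from $h(T_\varphi(\der_1,\der_2),\varphi(\der_3))=0$ to $T_\varphi=0$ via generation and invertibility of $\hat h$, and the extension of compatibility from generators to all of $M\times M$ using $h(ma,n)=a^*h(m,n)$.
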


Given a unital $^*$-algebra $\A$, a Lie algebra $\g\subseteq \Der(\A)$ and a right $\A$-module $M$ there are in general many maps $\varphi:\g\rightarrow M$ such that $C_{\A}=(\A,\g_{\pi},M,\varphi)$ is a real calculus, and in \cite{tn:rc.finite.noncomm.spaces} the notion of real calculus isomorphisms were used to demonstrate how the choice of $\varphi$ could lead to nonisomorphic real calculi in cases where $\A$, $\g_{\pi}$ and $M$ were fixed. Generally, what constitutes a natural choice of map $\varphi$ is not clear from the definitions and it is interesting to see in what ways the choice of $\varphi$ affects the resulting real calculus. Hence, we make the following definition.
\begin{definition}
    Let $\A$ be a unital $^*$-algebra, let $\g$ be a Lie algebra, let $\pi:\g\rightarrow \Der(\A)$ be a faithful representation of $\g$, and let $M$ be a right module over $\A$. Then the structure $B_{\A}=(\A,\g_{\pi},M)$ is called a \textit{pre-calculus}. Moreover, if $h:M\times M\rightarrow \A$ is a metric, then $(B_{\A},h)$ is called a \textit{metric pre-calculus}.
    
    If $\varphi:\g\rightarrow M$ is a $\R$-linear map such that $\varphi(\g)$ generates $M$ as a module, then $\varphi$ is called an \textit{anchor map}. Moreover, if $h(\varphi(\der),\varphi(\der'))$ is hermitian for every $\der,\der'\in\g$, then $\varphi$ is called a \emph{metric anchor map}.
\end{definition}
Given a metric pre-calculus $(B_{\A},h)$ it is possible to define affine connections $\nabla$ and determine whether they are compatible with the metric $h$ without the need of a metric anchor map $\varphi:\g\rightarrow M$. However, as can be seen from 
Definition \ref{def:torsion} the torsion of a connection directly depends on the choice of $\varphi$. This motivates the study of how the choice of anchor map affects the existence of a Levi-Civita connection given a fixed metric pre-calculus $((\A,\g,M),h)$ and, in particular, whether there exist metric pre-calculi for which no metric anchor map is such that the resulting real metric calculus is pseudo-Riemannian.

\section{Finite-dimensional simple modules}\label{sec:fin.dim.modules}
We shall treat the question of existence of a metric anchor map given a fixed metric pre-calculus $(B_{\A},h)$ such that the resulting real metric calculus is pseudo-Riemannian in the special case where $B_{\A}=(\Mat{N},\g_{\pi},\C^N)$. This scenario was considered in Section \ref{sec:matrix.alg.intro}, and a brief summary of that discussion is given below. The general metric $h$ on $M$ is given by $h(u,v)=x\cdot u^{\dagger}v$,   $x\in\R\setminus\{0\}$, and we can assume that $\g\subseteq\mathfrak{su}(N)$ without loss of generality.
Given a basis $D_1,...,D_n$ of $\g$, we let $\pi(D_i)=[D_i,\cdot]\in\Der(\A)$ for $i=1,...,n$, making $((\Mat{N},\g_{\pi},\C^N),h)$ into a metric pre-calculus. 
A map $\varphi:\g\rightarrow\C^N$ is a metric anchor map if and only if there is a nonzero vector $\hat{v}_0\in\C^N$ and constants $\mu_j\in\R$ such that
\begin{equation*}
    \varphi(D_j)=\mu_j\hat{v}_0,\quad j=1,...,n;
\end{equation*}
since an anchor map cannot be the trivial map, there must be at least one $j\in\{1,...,n\}$ such that $\mu_j\neq 0$.
As a final note before moving forward, connections on $\C^N$ can be parameterized by
\begin{equation*}
    \nabla_{D_j} v=v X_j=-vD_j+t_j v, \quad v\in\C^N,
\end{equation*}
where $X_j=-D_j+t_j\One_N$ and $t_j\in\C$.

Expanding on these preliminaries, we now describe metric compatibility of a connection $\nabla$. Calculating the sum 
$h(\nabla_j u,v)+h(u,\nabla_j v)$ explicitly (using the identity $D_j^{\dagger}=-D_j$), we get
\begin{align*}
   h(\nabla_j u,v)+h(u,\nabla_j v)&=h(t_j u-uD_j,v)+h(u,t_j v-vD_j)\\
   &=x\left[(\bar{t}_j+t_j)u^{\dagger}v+D_j u^{\dagger}v-u^{\dagger}v D_j\right] \\
   &=x\left[(\bar{t}_j+t_j)u^{\dagger}v+[D_j, u^{\dagger}v]\right]\\
   &=x(\bar{t}_j+t_j)u^{\dagger}v+D_j(h(u,v)),
\end{align*}
and hence
\begin{equation*}
    D_j( h(u,v))-\big(h(\nabla_j u,v)+h(u,\nabla_j v)\big)=-x(\bar{t}_j+t_j)u^{\dagger}v
\end{equation*}
is zero for all $u,v\in\C^N$ if and only if $t_j=i\lambda_j$ where $\lambda_j\in\R$ for $j=1,...,n$.

\begin{example}\label{ex:su2}
Below we give an example of a metric pre-calculus where there exists no metric connection $\nabla$ and metric anchor map $\varphi$ such that the torsion $T_{\varphi}$ vanishes everywhere. Let $N=2$ and $\g=\R\gen{D_1,D_2,D_3}=\mathfrak{su}(2)$, where $D_1, D_2$ and $D_3$ are given by
\begin{equation*}
    D_1=\begin{pmatrix}
    0 & i\\
    i & 0
    \end{pmatrix},\quad D_2=\begin{pmatrix}
    0 & 1\\
    -1 & 0
    \end{pmatrix},\quad D_3=\begin{pmatrix}
    i & 0\\
    0 & -i
    \end{pmatrix}.
\end{equation*}
In this basis of $\g$, the Lie bracket is described by the structure constants $f^k_{ij}$ (i.e., $[D_i,D_j]=f^k_{ij}D_k$, where summation over the index $k$ is implied as per the Einstein summation convention), where
\begin{align*}
    &(f^1_{12},f^2_{12},f^3_{12})=(0,0,-2)\\
    &(f^1_{13},f^2_{13},f^3_{13})=(0,2,0)\\
    &(f^1_{23},f^2_{23},f^3_{23})=(-2,0,0).
\end{align*}
As before, the metric $h$ is given by $h(u,v)=x\cdot u^{\dagger}v$, $u,v\in\C^2$ and $x\in\R\setminus\{0\}$, and setting $B_{\A}=(\A,\g_{\pi},\C^2)$ we have that $(B_{\A},h)$ is a metric pre-calculus. We shall show that there is no metric anchor map $\varphi:\g\rightarrow \C^2$ such that the resulting real metric calculus $(C_{\A},h)$ is pseudo-Riemannian by considering the metric compatibility and torsion of a connection $\nabla$ on $\g\times\C^2$.

As before, $\nabla_j v=t_j v-v D_j=v(t_j\One-D_j)=vX_j$ for $t_j\in \C$ and since the metric compatibility of $\nabla$ is equivalent to $t_j=i\lambda_j$ for some $\lambda_j\in\R$ we only consider these choices of $t_i$ going forward. To consider the torsion of $\nabla$ we need an anchor map $\varphi$, and for the sake of readability we write $\varphi_j=\varphi(\der_j)\in\C^2$ and  $h_{ij}=x\varphi_i^{\dagger}\varphi_j$; as noted earlier each $\varphi_j$ must satisfy
\begin{equation*}
    \varphi_j=\mu_j \hat{v}_0,\quad \mu_j\in\R,\quad 0\neq \hat{v}_0\in \C^N
\end{equation*}
if  $\big((\Mat{2},\g_{\pi},\C^N,\varphi),h\big)$ is to be a real metric calculus, and $\mu_j\neq 0$ for at least one $j\in\{1,2,3\}$.

With this notation, we have that
\begin{equation*}
    \nabla_j\varphi_k=\varphi_kX_j=\mu_k \hat{v}_0 X_j=\hat{v}_0(\mu_k X_j),
\end{equation*}
and the torsion becomes
\begin{equation*}
    T_{\varphi}(D_i,D_j)=\nabla_i \varphi_j-\nabla_j\varphi_i-\varphi([D_i,D_j])=\hat{v}_0(\mu_j X_i-\mu_i X_j-f^k_{ij}\mu_k\One).
\end{equation*}
Let $T_{ij}=\mu_j X_i-\mu_i X_j-f^k_{ij}\mu_k\One$, so that $\hat{v}_0 T_{ij}=T_{\varphi}(\der_i,\der_j)$. Then the torsion vanishes if and only if $\hat{v}_0$ is an eigenvector of each $T_{ij}$ with eigenvalue $\lambda_{ij}=0$. Given this, we note that $\mu_j X_i-\mu_i X_j=T_{ij}+f^k_{ij}\mu_k\One$ implies that $\hat{v}_0$ is an eigenvector of $\mu_j X_i-\mu_i X_j$ with eigenvalue $\lambda_{ij}+f^k_{ij}\mu_k=f^k_{ij}\mu_k\in\R$. Moreover, since $\mu_j X_i-\mu_i X_j$ is antihermitian it follows that its spectrum consists of purely imaginary numbers, and hence it follows that $f^k_{ij}\mu_k=0$, $i,j\in\{1,2,3\}$.

Calculating these sums explicitly for $i,j\in\{1,2,3\}$ yields
\begin{equation*}
    f^k_{12}\mu_k=2\mu_3=0,\quad f^k_{13}\mu_k=-2\mu_2=0,\quad f^k_{23}\mu_k=2\mu_1=0.
\end{equation*}
Thus, the torsion cannot vanish unless $\varphi$ is the zero map, which is not an anchor map. Hence, there is no metric anchor map $\varphi$ such that $\big((\Mat{2},\mathfrak{su}(2)_{\pi},\C^N,\varphi),h\big)$ is a pseudo-Riemannian calculus.
\end{example}
\subsection{The general case when $\A=\Mat{N}$}
The above example can be generalized to arbitrary metric pre-calculi $(B_{\A},h)$, where $B_{\A}=(\Mat{N},\g_{\pi},\C^N)$ and $h(u,v)=x\cdot u^{\dagger}v$ for $u,v\in\C^N$ where $x\in\R\setminus\{0\}$. As it turns out, the existence of a metric anchor map $\varphi:g\rightarrow\C^N$ such that the resulting real metric calculus $(C_{\A},h)$ is pseudo-Riemannian generally depends on the Lie algebra $\g\subset\mathfrak{su}(N)$, and thus it is necessary to understand precisely what Lie algebras $\g$ that are possible. 

\begin{proposition}\label{prop:levi.decomposition.suN}
Let $\g\subseteq\mathfrak{su}(N)$ be a nonabelian Lie algebra for $N\geq 2$. Then $\g\simeq \mathfrak{ab}(m)\oplus\g_{ss}$ is a direct sum, where $\g_{ss}$ is semisimple and $\mathfrak{ab}(m)$ denotes the abelian Lie algebra of dimension $m\geq 0$.
\end{proposition}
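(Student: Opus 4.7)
The plan is to exploit that $\mathfrak{su}(N)$, being the Lie algebra of a compact Lie group, carries a canonical $\mathrm{ad}$-invariant positive-definite inner product, and that this form restricts to any Lie subalgebra $\g\subseteq\mathfrak{su}(N)$. A Lie algebra equipped with such a form is reductive, and the decomposition in the statement is then obtained by splitting off the center along its orthogonal complement.

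Concretely, I take the bilinear form $\langle X,Y\rangle:=-\tr(XY)$ on $\mathfrak{su}(N)$. Since every $X\in\mathfrak{su}(N)$ satisfies $X^\hconj=-X$, one has $\langle X,X\rangle=\tr(X^\hconj X)\geq 0$ with equality only when $X=0$, so the form is positive-definite; cyclic invariance of the trace yields $\langle[Z,X],Y\rangle+\langle X,[Z,Y]\rangle=0$. Restricted to $\g$ the form remains positive-definite and invariant. Let $\mathfrak{z}:=Z(\g)$, which is an ideal. Invariance forces $\mathfrak{z}^{\perp}$ to be an ideal as well: for $Y\in\mathfrak{z}^{\perp}$, $X\in\g$, and $Z\in\mathfrak{z}$ one computes $\langle[X,Y],Z\rangle=-\langle Y,[X,Z]\rangle=0$. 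Non-degeneracy then gives an ideal direct sum $\g=\mathfrak{z}\oplus\mathfrak{z}^{\perp}$, and I set $m:=\dim\mathfrak{z}$ and $\g_{ss}:=\mathfrak{z}^{\perp}$.

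To verify that $\g_{ss}$ is semisimple I show it has no nonzero abelian ideal, which is sufficient because the last nonzero term in the derived series of any solvable ideal is a nonzero abelian ideal. If $\mathfrak{a}\subseteq\g_{ss}$ is an abelian ideal, the direct-sum decomposition implies that $\mathfrak{a}$ is also an ideal of $\g$ (as $\mathfrak{z}$ acts trivially on $\g_{ss}$), so for $X\in\mathfrak{a}$ and $Y\in\g$ the element $W:=[X,Y]$ lies in $\mathfrak{a}$, and invariance together with abelianness give
\begin{equation*}
    \langle W,W\rangle=\langle[X,Y],W\rangle=-\langle Y,[X,W]\rangle=0,
\end{equation*}
forcing $W=0$. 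Hence $X\in\mathfrak{z}\cap\g_{ss}=0$, so $\mathfrak{a}=0$. Since $\g$ is assumed nonabelian, $\g_{ss}\neq 0$.

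I do not expect any serious obstacle: this is essentially the standard proof that every compact Lie algebra is reductive. The only point requiring care is that the orthogonal complement of an ideal with respect to the invariant form is again an ideal, so that the decomposition $\g=\mathfrak{z}\oplus\g_{ss}$ is a genuine Lie-algebra direct sum and not merely one of vector spaces; this is where the hypothesis $\g\subseteq\mathfrak{su}(N)$ (rather than an arbitrary real Lie algebra) enters essentially, via the existence of the positive-definite invariant form.
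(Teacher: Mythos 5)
Your proof is correct, but it follows a genuinely different route from the paper's. The paper invokes the Levi decomposition to write $\g$ as a semidirect sum of $\operatorname{rad}(\g)$ and a semisimple factor, argues that $\operatorname{rad}(\g)$ is abelian because a compact solvable Lie algebra is abelian, and then shows the semidirect sum is actually direct by a concrete matrix computation: after simultaneously diagonalizing the abelian radical, the diagonal entries of $[X,D]$ vanish for $D$ diagonal, so $[X,D]\in\operatorname{rad}(\g)$ forces $[X,D]=0$. You instead bypass the Levi theorem entirely and run the standard ``compact implies reductive'' argument with the invariant form $\langle X,Y\rangle=-\tr(XY)$: you split off the center $\mathfrak{z}=Z(\g)$ along its orthogonal complement, check both summands are ideals, and prove semisimplicity of $\mathfrak{z}^{\perp}$ directly by showing it has no nonzero abelian ideal (your computation $\langle W,W\rangle=-\langle Y,[X,W]\rangle=0$ is correct, and the reduction from solvable ideals to abelian ideals via the derived series is standard). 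The two decompositions agree -- for a reductive algebra $\operatorname{rad}(\g)=Z(\g)$ -- but your identification of the abelian summand as the center is more immediate. What each approach buys: the paper's proof is shorter if one grants the Levi theorem and the structure theory of compact Lie algebras, and its diagonalization step is concrete and elementary; yours is essentially self-contained (modulo the characterization of semisimplicity by absence of nonzero abelian ideals), avoids Levi altogether, and makes transparent exactly where the hypothesis $\g\subseteq\su{N}$ enters, namely through the existence of a positive-definite invariant form.
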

\begin{proof}
By the Levi decomposition theorem it is clear that if $\g$ is nonabelian it can be decomposed as the semidirect sum of $\operatorname{rad}(\g)$ and a semisimple Lie algebra $\g_{ss}$. Since $\operatorname{rad}(\g)$ is a solvable ideal of $\g$ it is also a solvable Lie subalgebra of $\mathfrak{su}(N)$. Now, since $\mathfrak{su}(N)$ is a compact Lie algebra it follows that $\operatorname{rad}(\g)$ is a compact solvable Lie algebra, and hence abelian, i.e., there exists an $m\geq 0$ such that $\operatorname{rad}(\g)\simeq\mathfrak{ab}(m)$, and hence we may assume without loss of generality that it consists of purely diagonal matrices $D$. Furthermore, since $\operatorname{rad}(\g)$ is an ideal of $\g$, it follows that $[X,D]\in\operatorname{rad}(\g)$ for any $X\in\g_{ss}$ and any $D\in\operatorname{rad}(\g)$. Moreover, by direct computation, it is clear that the diagonal entries of any such matrix $[X,D]$ are all zero, implying that $[X,D]=0$ for $X\in\g_{ss}$ and $D\in\operatorname{rad}(\g)\simeq\mathfrak{ab}(m)$. Hence the Levi decomposition is a direct sum, finishing the proof.
\end{proof}

The result in Example \ref{ex:su2}, where it was not possible to find a metric anchor map such that the torsion $T_{\varphi}$ vanishes, is ultimately due to a simple general fact about semisimple Lie algebras which we state below.

\begin{lemma}\label{lem:g.is.semisimple}
If $\g$ is a semisimple real Lie algebra with basis $\der_1,...,\der_n$ and Lie bracket given by $[\der_i,\der_j]=f^k_{ij}\der_k$, $f^k_{ij}\in\R$ for $i,j,k\in\{1,...,n\}$, then the sum $c_k f^k_{ij}=0$ for all $i,j$ (where each $c_k\in\R$) if and only if $c_k=0$ for all $k$. 
\end{lemma}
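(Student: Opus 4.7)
The plan is to reinterpret the hypothesis as a statement about a linear functional on $\g$, and then invoke the standard fact that a semisimple Lie algebra equals its own derived subalgebra. The ``if'' direction (all $c_k=0$ implies $c_k f^k_{ij}=0$) is immediate, so the content is entirely in the converse.

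First, I would define the $\R$-linear functional $\phi:\g\to\R$ by $\phi(\der_k)=c_k$ and extending linearly. Since $[\der_i,\der_j]=f^k_{ij}\der_k$, the assumption $c_k f^k_{ij}=0$ for all $i,j$ translates directly into $\phi([\der_i,\der_j])=0$ for every pair of basis vectors, and hence by $\R$-bilinearity of the bracket and linearity of $\phi$ into
\begin{equation*}
    \phi([X,Y])=0\quad\text{for all } X,Y\in\g.
\end{equation*}
In other words, $\phi$ annihilates the derived subalgebra $[\g,\g]$.

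Next, I would use that any semisimple Lie algebra $\g$ is perfect, i.e.\ $[\g,\g]=\g$. This is standard: the quotient $\g/[\g,\g]$ is abelian, hence a solvable ideal of the semisimple algebra $\g$, and the only such ideal is $0$. Applying this, $\phi$ annihilates all of $\g$, so $c_k=\phi(\der_k)=0$ for every $k$, which is the desired conclusion.

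There is no real obstacle here; the only nontrivial input is the perfectness of semisimple Lie algebras, which is a well-known consequence of the structure theory (or of the Levi decomposition already invoked in Proposition \ref{prop:levi.decomposition.suN}). The main care to take is simply the clean reformulation of the index condition $c_k f^k_{ij}=0$ as the vanishing of a functional on $[\g,\g]$, after which the statement follows in one line.
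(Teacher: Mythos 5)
Your proof is correct, and it takes a genuinely different and more economical route than the paper. You observe that the hypothesis $c_kf^k_{ij}=0$ says exactly that the linear functional $\phi(\der_k)=c_k$ annihilates every bracket $[\der_i,\der_j]$, hence all of $[\g,\g]$, and then perfectness ($[\g,\g]=\g$ for semisimple $\g$) forces $\phi=0$. The paper instead works with the Killing form $B$: it uses Cartan's criterion to get nondegeneracy, writes $f^k_{ij}c_k=B([\der_i,\der_j],\der_lB^{lk}c_k)$, uses $[\g,\g]=\g$ to conclude that the element $\der_lB^{lk}c_k$ is annihilated by $B$ against a spanning set, and then inverts $B$ to recover $c_i=0$. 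Note that the paper's argument still needs perfectness at the key step, so your proof isolates the single structural fact actually doing the work and dispenses with the Killing-form machinery entirely; what the paper's version buys in exchange is an explicit formula expressing the structure constants in terms of $B$, which is in the spirit of the index computations used elsewhere in the article but is not logically necessary here. Both arguments are complete and correct.
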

\begin{proof}
Let $B$ denote the Killing form on $\g$. Since $\g$ is assumed to be semisimple, Cartan's criterion implies that $B$ is nondegenerate. Given a basis $\der_1,...,\der_n$ for $\g$, let $B_{ij}=B(\der_j,\der_k)$ denote the components of $B$ in this basis and let $B^{ij}$ denote the components of the inverse of $B$, i.e., $B_{ij}B^{jk}=B^{kj}B_{ji}=\delta^k_i$, where $\delta^k_i$ denotes the Kronecker delta. It follows that
\begin{equation*}
    B([\der_i,\der_j],\der_k)=f^l_{ij}B(\der_l,\der_k)=f^l_{ij}B_{lk},
\end{equation*}
and hence it follows that
\begin{equation*}
    f^k_{ij}=B([\der_i,\der_j],\der_l)B^{lk},
\end{equation*}
implying that
\begin{equation*}
    f^k_{ij}c_k=B([\der_i,\der_j],\der_l)B^{lk}c_k=B([\der_i,\der_j],\der_l B^{lk}c_k).
\end{equation*}
Since $[\g,\g]=\g\neq 0$, this expression is zero for all $i,j=1,...,n$ if and only if
\begin{equation*}
    B^{lk}c_k=0
\end{equation*}
for all $l=1,...,n$. Consequently,
\begin{equation*}
    0=B_{il}B^{lk}c_k=\delta_i^k c_k=c_i=0
\end{equation*}
for $i=1,...,n$. The statement follows.
\end{proof}

\begin{proposition}\label{prop:torsion.semisimple.lie.alg}
Let $\big((\Mat{N},\g_{\pi},\C^N,\varphi),h\big)$ be a real metric calculus such that $\g\subseteq\mathfrak{su}(N)$ is a semisimple Lie algebra. Then every connection $\nabla:\g\times \C^N\rightarrow\C^N$ that is compatible with $h$ has non-vanishing torsion $T_{\varphi}$.
\end{proposition}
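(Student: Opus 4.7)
The plan is to mimic the argument of Example \ref{ex:su2} in full generality and then apply Lemma \ref{lem:g.is.semisimple} to close the argument. Fix a basis $D_1,\dots,D_n$ of $\g$ with structure constants $f^k_{ij}$, and suppose towards a contradiction that there exists a connection $\nabla$ compatible with $h$ whose torsion $T_{\varphi}$ vanishes identically. Since $\varphi$ is a metric anchor map, the discussion from Section \ref{sec:matrix.alg.intro} provides a nonzero vector $\hat{v}_0\in\C^N$ and real constants $\mu_j$, not all zero, with $\varphi(D_j)=\mu_j\hat{v}_0$. Metric compatibility forces the connection coefficients $X_j=t_j\One-D_j$ to satisfy $t_j=i\lambda_j$ with $\lambda_j\in\R$.

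Next, I would compute the torsion on basis elements exactly as in Example \ref{ex:su2}:
\begin{equation*}
T_{\varphi}(D_i,D_j)=\nabla_i\varphi(D_j)-\nabla_j\varphi(D_i)-\varphi([D_i,D_j])=\hat{v}_0\big(\mu_j X_i-\mu_i X_j-f^k_{ij}\mu_k\One\big).
\end{equation*}
Setting $Y_{ij}=\mu_j X_i-\mu_i X_j$ and assuming this vanishes gives that $\hat{v}_0$ is an eigenvector of $Y_{ij}$ with eigenvalue $f^k_{ij}\mu_k$ for every pair $i,j$.

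The crucial observation is that, since $t_j=i\lambda_j$ and $D_j\in\su{N}$, the matrix $Y_{ij}=-(\mu_j D_i-\mu_i D_j)+i(\mu_j\lambda_i-\mu_i\lambda_j)\One$ is antihermitian, so its spectrum lies in $i\R$. The scalar $f^k_{ij}\mu_k$ is real, so the only way it can appear as an eigenvalue is if $f^k_{ij}\mu_k=0$ for all $i,j\in\{1,\dots,n\}$.

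Finally, Lemma \ref{lem:g.is.semisimple} applied to the semisimple Lie algebra $\g$ with coefficients $c_k=\mu_k$ forces $\mu_k=0$ for all $k$, contradicting the fact that $\varphi$ is a nontrivial anchor map. Hence no such torsion-free metric connection exists. The main conceptual step is recognising that antihermiticity of $Y_{ij}$ eliminates all real eigenvalues; once this is in place, semisimplicity does the rest through Lemma \ref{lem:g.is.semisimple}. I expect no substantial obstacle beyond carefully tracking that $\mu_j X_i-\mu_i X_j$ really is antihermitian under the metric compatibility constraint $t_j\in i\R$.
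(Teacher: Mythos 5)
Your proposal is correct and follows essentially the same route as the paper's own proof: parametrize metric connections by $X_j=i\lambda_j\One-D_j$, express the torsion as $\hat{v}_0(\mu_j X_i-\mu_i X_j-f^k_{ij}\mu_k\One)$, use antihermiticity of $\mu_j X_i-\mu_i X_j$ to force the real eigenvalue $f^k_{ij}\mu_k$ to vanish, and then invoke Lemma \ref{lem:g.is.semisimple} to conclude $\mu_k=0$, contradicting nontriviality of the anchor map. No gaps.
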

\begin{proof}
Let $D_1,...,D_n$ be a basis of $\g$ and let $\nabla$ be an affine connection on $\g\times\C^N$ that is compatible with $h$. Then, as noted in the beginning of this section, there are $\lambda_1,...,\lambda_n\in\R$ such that
\begin{equation*}
    \nabla_j v=v X_j=v(i\lambda_j\One-D_j),\quad j=1,...,n.
\end{equation*}
Since $\varphi:\g\rightarrow\C^N$ is a metric anchor map there is a nonzero vector $\hat{v}_0\in\C^N$ and $\mu_j\in\R$ such that $\mu_j\neq 0$ for at least one $j\in\{1,...,n\}$ and such that
\begin{equation*}
    \varphi(D_j)=\mu_j\hat{v}_0,\quad j=1,...,n.
\end{equation*}
Considering the torsion $T_{\varphi}$ we find that
\begin{equation*}
    T_{\varphi}(\der_i,\der_j)=\hat{v}_0(\mu_j X_i-\mu_i X_j-\mu_kf^k_{ij}\One),
\end{equation*}
and we note that the torsion vanishes iff 
\begin{equation*}
    \hat{v}_0(\mu_j X_i-\mu_i X_j)=\hat{v}_0 (\mu_k f^k_{ij}).
\end{equation*}
Assume now that the torsion vanishes everywhere.
in particular we note that, since $\g$ is a real Lie algebra and each $\mu_i\in\R$, the sum $\mu_k f^k_{ij}\in\R$, i.e., that $\hat{v}_0$ is an eigenvector with a real eigenvalue to the matrix $\mu_j X_i-\mu_i X_j$. However, since each $X_j$ is antihermitian it follows that $\mu_j X_i-\mu_i X_j$ is an antihermitian matrix for every $i,j$, implying that all eigenvalues of $\mu_j X_i-\mu_i X_j$ are purely imaginary. Consequently, since the sum $\mu_k f^k_{ij}$ is an eigenvalue to $\mu_j X_i-\mu_i X_j$, it follows that $\mu_k f^k_{ij}=0$. And since $\g$ is semisimple, Lemma \ref{lem:g.is.semisimple} implies that $\mu_k=0$ for all $k$. Hence $\varphi$ is the zero map, which is a contradiction since $\varphi$ was assumed to be a metric anchor map, and the result follows.
\end{proof}

So far we have only considered metric compatibility and torsion of a connection $\nabla:\g\times\C^N\rightarrow\C^N$. However, in the context of real calculi one also has to take into account the condition that $(C_{\A},h,\nabla)$ is a real connection calculus, i.e., that  $h(\nabla_{\der_i}\varphi(\der_j),\varphi(\der_k))$ is hermitian for all $i,j,k$. This condition restricts the possibilities for the choice of anchor map $\varphi$, as the following lemma shows.

\begin{lemma}\label{lem:RCC.cond.matrix}
Let $\big((\Mat{N},\g_{\pi},\C^N,\varphi),h\big)$ be a real metric calculus where $D_1,...,D_n$ is a basis of $\g$ and $\varphi:D_j\mapsto \mu_j \hat{v}_0$, $\mu_j\in\R$, $j=1,...,n$. Given a metric connection $\nabla:\g\times\C^N\rightarrow\C^N$, $(C_{\A},h,\nabla)$ is a real connection calculus iff $\nabla_{D} \hat{v}_0=0$ for $D\in\g$.
\end{lemma}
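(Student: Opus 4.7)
The plan is to unpack the real-connection-calculus condition, collapse it to a single matrix identity in $X_i$ and $\hat{v}_0$, and then use antihermiticity of the basis $D_i$ to read off the eigenvalue.

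First I would substitute $\varphi(D_j)=\mu_j\hat{v}_0$, $\nabla_{D_i}\varphi(D_j)=\mu_j\hat{v}_0 X_i$ and $h(u,v)=x\cdot u^{\dagger}v$ directly into the hermiticity requirement of Definition \ref{def:rcc}, obtaining
\begin{equation*}
h(\nabla_{D_i}\varphi(D_j),\varphi(D_k))=x\mu_j\mu_k\, X_i^{\dagger}\hat{v}_0^{\dagger}\hat{v}_0.
\end{equation*}
Because $\varphi$ is an anchor map some $\mu_{j_0}$ is nonzero, and the requirement that the above expression be hermitian for all $j,k$ collapses to the single matrix equation $X_i^{\dagger}\hat{v}_0^{\dagger}\hat{v}_0=\hat{v}_0^{\dagger}\hat{v}_0\, X_i$ for every $i=1,\dots,n$. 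What remains is a purely linear-algebraic statement about the fixed matrices $X_i$ and the fixed vector $\hat{v}_0$.

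The ``if'' direction is then immediate: if $\nabla_{D_i}\hat{v}_0=\hat{v}_0 X_i=0$, then also $X_i^{\dagger}\hat{v}_0^{\dagger}=0$ by taking $\dagger$, so both sides of the reduced equation vanish.

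For the converse I would set $w_i=\hat{v}_0 X_i$, so that the reduced equation reads $w_i^{\dagger}\hat{v}_0=\hat{v}_0^{\dagger}w_i$. Both sides are outer products and hence of rank at most one, so when $w_i\neq 0$ a column-space comparison forces $w_i=\alpha_i\hat{v}_0$ for some $\alpha_i\in\C$; substituting back and cancelling $\hat{v}_0^{\dagger}\hat{v}_0\neq 0$ forces $\alpha_i\in\R$. Thus $\hat{v}_0$ is a left eigenvector of $X_i=i\lambda_i\One-D_i$, and hence of $D_i$, with eigenvalue $i\lambda_i-\alpha_i$. Since $D_i$ is antihermitian its left eigenvalues are purely imaginary, so the condition $i\lambda_i-\alpha_i\in i\R$ with $\lambda_i,\alpha_i\in\R$ forces $\alpha_i=0$, i.e.\ $\nabla_{D_i}\hat{v}_0=0$.

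The only nontrivial step is the converse, and the crux there is combining the rank-one reduction with the antihermiticity observation: the hermiticity condition essentially only ``sees'' the real part of the eigenvalue of $\hat{v}_0$ under $X_i$, and antihermiticity of $D_i$ is exactly what rules out a nonzero real part.
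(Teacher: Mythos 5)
Your proposal is correct and follows essentially the same route as the paper: both reduce the hermiticity condition of Definition \ref{def:rcc} to the single matrix identity $X_i^{\dagger}\hat{v}_0^{\dagger}\hat{v}_0=\hat{v}_0^{\dagger}\hat{v}_0X_i$, deduce that $\hat{v}_0$ is a left eigenvector of $X_i$ with real eigenvalue, and use antihermiticity to force that eigenvalue to vanish. The only cosmetic difference is in how the eigenvector statement is extracted -- the paper normalizes $\hat{v}_0$ and multiplies the identity on the left by $\hat{v}_0$, whereas you compare column spaces of the two rank-one matrices -- but the conclusion and the key mechanism are identical.
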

\begin{proof}
If $\nabla_k\hat{v}_0 =0$ for all $k$, then $h(\nabla_{i}\varphi(D_j),\varphi(D_k))=h(0,\varphi(D_k))=0$ is trivially hermitian for all $i,j,k$, and hence sufficiency of the given condition immediately follows. 

To prove necessity we assume that $h(\nabla_{i}\varphi(\der_j),\varphi(\der_k))$ is hermitian for $i,j,k$ such that $\mu_j\mu_k\neq 0$; that such $j$ and $k$ exist follows from the fact that $\varphi$ is not the zero map. Since $\nabla_{\der_i}$ is a linear map there is, for $i=1,...,n$, a unique matrix $X_i\in \Mat{N}$ such that $\nabla_i \hat{v}_0=\hat{v}_0 X_i$, and by explicit calculation one gets:
\begin{align*}
    0&=h(\nabla_{i}\varphi(D_j),\varphi(D_k))-h(\nabla_{i}\varphi(D_j),\varphi(D_k))^{\dagger}\\
    &=h(\nabla_i\mu_j\hat{v}_0,\mu_k\hat{v}_0)-h(\mu_k\hat{v}_0,\nabla_i\mu_j\hat{v}_0)\\
    &=\mu_j\mu_k X_i^{\dagger}\hat{v}_0^{\dagger}\hat{v}_0-\mu_j\mu_k\hat{v}_0^{\dagger}\hat{v}_0 X_i,
\end{align*}
i.e., it follows that $X_i^{\dagger}\hat{v}_0^{\dagger}\hat{v}_0=\hat{v}_0^{\dagger}\hat{v}_0 X_i$. Assuming, without loss of generality, that $\hat{v}_0\hat{v}_0^{\dagger}=||\hat{v}_0||^2=1$, we get that
\begin{equation*}
    \hat{v}_0 X_i=\hat{v}_0(\hat{v}_0^{\dagger}\hat{v}_0 X_i)=\hat{v}_0(X_i^{\dagger}\hat{v}_0^{\dagger}\hat{v}_0)=(\hat{v}_0X_i^{\dagger}\hat{v}_0^{\dagger})\hat{v}_0,
\end{equation*}
i.e., $\hat{v}_0$ is an eigenvector of $X_i$ with eigenvalue $\lambda_i=\hat{v}_0X_i^{\dagger}\hat{v}_0^{\dagger}$. Moreover, since $X_i^{\dagger}=-X_i$, this implies that
\begin{equation*}
    \lambda_i=\hat{v}_0 X^{\dagger}_i\hat{v}_0^{\dagger}=\hat{v}_0(-X_i)\hat{v}_0^{\dagger}=-\lambda_i\hat{v}_0\hat{v}_0^{\dagger}=-\lambda_i,
\end{equation*}
i.e., that $\lambda_i=0$. Hence, $\nabla_i \hat{v}_0=\hat{v}_0 X_i=0$ and the statement follows.
\end{proof}
Let $\big((\Mat{N},\g_{\pi},\C^N),h\big)$ be a metric pre-calculus, where $h(u,v)=x\cdot u^{\dagger}v$, $x\in\R\setminus\{0\}$. If $\g$ is a semisimple Lie algebra, Proposition \ref{prop:torsion.semisimple.lie.alg} implies that no metric connection $\nabla$ on $\g\times\C^N$ has vanishing torsion with respect to a metric anchor map $\varphi:\g\rightarrow\C^N$. However, if $\g$ is not semisimple and $\varphi:\g\rightarrow\C^N$ is a metric anchor map, then Lemma \ref{lem:RCC.cond.matrix} implies that there is at most one affine connection $\nabla:\g\times\C^N\rightarrow\C^N$ such that $\big((\Mat{N},\g_{\pi},\C^N,\varphi),h,\nabla\big)$ is a real connection calculus, and if this is the case then
\begin{equation*}
    \nabla_D \varphi(D')=0,\quad D,D'\in\g.
\end{equation*}
We give necessary and sufficient conditions for when there exists a metric anchor map $\varphi$ such that the above equation defines a connection $\nabla$ that has vanishing torsion with respect to $\varphi$.
\begin{theorem}\label{thm:riemannian.criterion.matrix}
Let $\big((\Mat{N},\g_{\pi},\C^N),h\big)$ be a metric pre-calculus.
Then there exists a metric anchor map $\varphi:\g\rightarrow\C^N$ such that the resulting real metric calculus $\big((\Mat{N},\g_{\pi},\C^N,\varphi),h\big)$ is pseudo-Riemannian if and only if $\g\subseteq\mathfrak{su}(N)$ is not semisimple and there exists a common eigenvector to all matrices in $\g$.
\end{theorem}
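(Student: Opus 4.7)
The plan is to treat the two directions separately, leveraging Proposition~\ref{prop:torsion.semisimple.lie.alg}, Lemma~\ref{lem:RCC.cond.matrix}, and the Levi decomposition of Proposition~\ref{prop:levi.decomposition.suN} as the three main inputs.

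For the forward direction, suppose a metric anchor map $\varphi$ and a Levi-Civita connection $\nabla$ turn $\big((\Mat{N},\g_\pi,\C^N,\varphi),h\big)$ into a pseudo-Riemannian calculus. Proposition~\ref{prop:torsion.semisimple.lie.alg} immediately rules out $\g$ being semisimple. From the preliminary discussion at the start of Section~\ref{sec:fin.dim.modules}, metric compatibility of $\nabla$ forces $\nabla_j v = v(i\lambda_j\One - D_j)$ with $\lambda_j\in\R$, while the metric anchor map has the form $\varphi(D_j) = \mu_j\hat{v}_0$. Lemma~\ref{lem:RCC.cond.matrix} then yields $\nabla_j \hat{v}_0 = 0$, which unpacks as $\hat{v}_0 D_j = i\lambda_j \hat{v}_0$ for every basis element. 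Taking conjugate transposes, $\hat{v}_0^\dagger$ is a common eigenvector of every $D\in\g$, as required.

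For the converse, assume $\g$ is not semisimple and that a common eigenvector $v_0$ of all matrices in $\g$ exists; I may normalize so that $v_0^\dagger v_0 = 1$. If $\g$ is abelian, all structure constants vanish and the argument below trivializes; otherwise Proposition~\ref{prop:levi.decomposition.suN} gives a \emph{direct-sum} decomposition $\g = \mathfrak{ab}(m)\oplus\g_{ss}$, with $m\geq 1$ since $\g$ is not semisimple. Pick a basis $D_1,\dots,D_m$ of $\mathfrak{ab}(m)$ and $D_{m+1},\dots,D_n$ of $\g_{ss}$. Since each $D_j$ is antihermitian, the common eigenvector satisfies $D_j v_0 = i\lambda_j v_0$ with $\lambda_j\in\R$, and the row vector $\hat{v}_0 := v_0^\dagger$ satisfies $\hat{v}_0 D_j = i\lambda_j\hat{v}_0$. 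Now define the metric-compatible connection $\nabla_j v = v(i\lambda_j\One - D_j)$, choose $\mu_1,\dots,\mu_m\in\R$ not all zero, set $\mu_j = 0$ for $j>m$, and let $\varphi(D_j) = \mu_j\hat{v}_0$; this is a metric anchor map.

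It remains to verify the pseudo-Riemannian property. The eigenvalue equation gives $\nabla_j \hat{v}_0 = 0$, so Lemma~\ref{lem:RCC.cond.matrix} supplies the real connection calculus condition, and metric compatibility is built in. Because $\hat{v}_0 X_i = 0$ for every $i$, the torsion collapses to
\begin{equation*}
    T_\varphi(D_i, D_j) = -\mu_k f^k_{ij}\,\hat{v}_0,
\end{equation*}
so vanishing torsion is equivalent to $\mu_k f^k_{ij} = 0$ for all $i,j$. The direct-sum structure ensures that $f^k_{ij}$ can be nonzero only when $i,j,k > m$, while by construction $\mu_k$ is supported on $k\leq m$, so the sum vanishes identically. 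The main obstacle is precisely this cancellation: had the Levi decomposition been only semidirect, structure constants with mixed indices could survive and spoil the torsion; it is the direct-sum conclusion of Proposition~\ref{prop:levi.decomposition.suN} that makes killing $\mu_k$ on the semisimple component sufficient, completing the construction.
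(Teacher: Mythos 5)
Your proposal is correct and follows essentially the same route as the paper's proof: Proposition~\ref{prop:torsion.semisimple.lie.alg} and Lemma~\ref{lem:RCC.cond.matrix} for necessity, and for sufficiency the direct-sum Levi decomposition of Proposition~\ref{prop:levi.decomposition.suN} together with an anchor map supported on the abelian summand and the connection $\nabla_j v = v(i\lambda_j\One - D_j)$ built from the common eigenvector. The only (harmless) differences are cosmetic: you pass to $\hat{v}_0^\dagger$ where the paper works directly with the row vector $\hat{v}_0$ as a left eigenvector, and you explicitly flag the abelian case, which the paper leaves implicit.
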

\begin{proof}
We begin by proving sufficiency of the given condition.
If $\g$ is not semisimple, then Proposition \ref{prop:levi.decomposition.suN} implies that $\g$ can be written as a direct sum of an abelian and a semisimple Lie subalgebra, which we denote by $\g^{ab}$ and $\g^{ss}$, respectively. Let $D^{ab}_1,...,D^{ab}_p\in\g^{ab}$ be a basis of $\g^{ab}$ and let $D^{ss}_1,...,D_q^{ss}\in\g^{ss}$ be a basis of $\g^{ss}$, making $D^{ab}_1,...,D_p^{ab},D_1^{ss},...,D^{ss}_q$ a basis of $\g$. 

Let $0\neq\hat{v}_0\in\C^N$ be a common eigenvector of $D^{ab}_1,...D^{ab}_p,D^{ss}_1,...,D^{ss}_q$, with corresponding eigenvalues $i\lambda_1^{ab},...,i\lambda^{ab}_p,i\lambda^{ss}_1,...,i\lambda^{ss}_q$. For $j=1,...,p$, let $\mu^{ab}_j\in\R$ be such that at least one $\mu^{ab}_j$ is nonzero. Now let $\varphi:\g\rightarrow\C^N$ be defined by $\varphi(D^{ab}_j)=\mu^{ab}_j\hat{v}_0$ and $\varphi(D^{ss}_k)=0$ for $j=1,...,p$ and $k=1,...,q$. Then $\varphi$ is a metric anchor map, since $\varphi(\g)$ generates $\C^N$ and $h(\varphi(D),\varphi(D'))=x\varphi(D)^{\dagger}\varphi(D')$ is hermitian for $D,D'\in\g$ regardless of the specific value of $x\in\R\setminus\{0\}$. Let $\nabla:\g\times\C^N\rightarrow\C^N$
be the affine connection given by
\begin{equation*}
    \nabla_{D^{ab}_j} v=v(-D^{ab}_j+i\lambda^{ab}_j\One_N),\quad \nabla_{D^{ss}_k} v=v(-D^{ss}_k+i\lambda^{ss}_k\One_N).
\end{equation*}
From this it is clear that $\nabla_{D}\hat{v}_0=0$ for any $D\in\g$, implying that
\begin{equation*}
    \big((\Mat{N},\g_{\pi},\C^N,\varphi),h,\nabla\big)
\end{equation*} is indeed a real connection calculus by Lemma \ref{lem:RCC.cond.matrix}. Moreover, $\nabla$ is compatible with the metric, since each eigenvalue $i\lambda^{ab}_j$ and $i\lambda^{ss}_k$ is purely imaginary. Left is to consider the torsion:
\begin{align*}
    T_{\varphi}(D^{ab}_i,D^{ab}_j)&=\nabla_{D^{ab}_i}\varphi(D^{ab}_j)-\nabla_{D^{ab}_i}\varphi(D^{ab}_j)-\varphi([D^{ab}_i,D^{ab}_j])=0-0-\varphi(0)=0,\\
    T_{\varphi}(D^{ab}_j,D^{ss}_k)&=\nabla_{D^{ab}_j}\varphi(D^{ss}_k)-\nabla_{D^{ss}_k}\varphi(D^{ab}_j)-\varphi([D^{ab}_j,D^{ss}_k])=0-0-\varphi(0)=0,\\
    T_{\varphi}(D^{ss}_k,D^{ss}_l)&=\nabla_{D^{ss}_k}\varphi(D^{ss}_l)-\nabla_{D^{ss}_l}\varphi(D^{ss}_k)-\varphi([D^{ss}_k,D^{ss}_l])=0-0-0=0,
\end{align*}
where the last equality follows from $[D^{ss}_k,D^{ss}_l]\in\g^{ss}$ implying that $\varphi([D^{ss}_k,D^{ss}_l])=0$. Hence, $\nabla$ is the Levi-Civita connection and $\big((\Mat{N},\g_{\pi},\C^N,\varphi),h\big)$ is pseudo-Riemannian.

To prove necessity of the given condition, assume that $\big((\Mat{N},\g_{\pi},\C^N,\varphi),h\big)$ is pseudo-Riemannian and that $\nabla:\g\times\C^N\rightarrow\C^N$ is the Levi-Civita connection. Let $D_1,...,D_n$ be a basis of $\g$. Since $\varphi:\g\rightarrow\C^N$ is a metric anchor map there are $\mu_j\in\R$, $j=1,...,n$ such that $\varphi(D_j)=\mu_j \hat{v}_0$ for some for some $0\neq\hat{v}_0\in\C^N$. Since 
\begin{equation*}
    \big((\Mat{N},\g_{\pi},\C^N,\varphi),h,\nabla\big)
\end{equation*}
is a real connection calculus, and since there exist $\lambda_1,...,\lambda_n\in\R$ such that
\begin{equation*}
    \nabla_j v=v(-D_j+i\lambda_j\One)=i\lambda_j v-vD_j,
\end{equation*}
Lemma \ref{lem:RCC.cond.matrix} immediately implies that $\hat{v}_0$ is an eigenvector to $D_j$ (with corresponding eigenvalue $i\lambda_j$), $j=1,...,n$. Moreover, since the torsion vanishes, Proposition \ref{prop:torsion.semisimple.lie.alg} implies that $\g$ is not semisimple. This finishes the proof.
\end{proof}

\begin{corollary}
Let $\big((\Mat{N},\g_{\pi},\C^N),h\big)$ be a metric pre-calculus. If $\g$ is solvable,
then there exists a metric anchor map $\varphi:\g\rightarrow\C^N$ such that the resulting real metric calculus $\big((\Mat{N},\g_{\pi},\C^N,\varphi),h\big)$ is pseudo-Riemannian.
\end{corollary}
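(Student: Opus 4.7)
The plan is to deduce the corollary directly from Theorem \ref{thm:riemannian.criterion.matrix}, which gives two necessary and sufficient conditions for the desired conclusion: that $\g \subseteq \mathfrak{su}(N)$ is not semisimple, and that all matrices in $\g$ share a common eigenvector. I will verify both under the solvability hypothesis (assuming implicitly that $\g \neq 0$, since otherwise there is no metric anchor map at all).

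First, a nonzero solvable Lie algebra cannot be semisimple, so the first condition is immediate. For the second condition, I will invoke the compactness argument already used in the proof of Proposition \ref{prop:levi.decomposition.suN}: since $\mathfrak{su}(N)$ is a compact Lie algebra and $\g \subseteq \mathfrak{su}(N)$ is solvable, $\g$ is a compact solvable real Lie algebra, hence abelian. Equivalently, the Levi decomposition of $\g$ reduces to $\g = \operatorname{rad}(\g) \simeq \mathfrak{ab}(m)$, which was shown in Proposition \ref{prop:levi.decomposition.suN} to be abelian.

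Once $\g$ is abelian, its elements are pairwise commuting antihermitian matrices in $\Mat{N}$. By a standard linear algebra fact, a commuting family of normal matrices over $\C$ is simultaneously unitarily diagonalizable; in particular, there exists a nonzero vector $\hat{v}_0 \in \C^N$ that is a common eigenvector for every $D \in \g$. Both conditions of Theorem \ref{thm:riemannian.criterion.matrix} are then satisfied, and the corollary follows.

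There is no real obstacle here; the only point worth flagging is that the compactness argument from Proposition \ref{prop:levi.decomposition.suN}, which was originally applied to $\operatorname{rad}(\g)$, applies equally well to $\g$ itself when $\g$ is assumed solvable from the outset, collapsing the Levi decomposition entirely to the abelian part and making the simultaneous diagonalization immediate.
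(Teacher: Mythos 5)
Your proof is correct, but it takes a genuinely different route from the paper's. The paper simply invokes Lie's theorem: a solvable Lie algebra acting on $\C^N$ has a common eigenvector (strictly speaking this requires passing to the complexification of the real Lie algebra $\g$, since Lie's theorem is stated over algebraically closed fields, but a common eigenvector for $\g_{\C}$ is one for $\g$). You instead reuse the compactness argument from the proof of Proposition \ref{prop:levi.decomposition.suN}: a solvable subalgebra of the compact Lie algebra $\mathfrak{su}(N)$ is abelian, and a commuting family of antihermitian (hence normal) matrices is simultaneously unitarily diagonalizable, so a common eigenvector exists. Both arguments are valid deductions from Theorem \ref{thm:riemannian.criterion.matrix}. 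The paper's version is shorter; yours avoids Lie's theorem and the complexification subtlety entirely, stays within machinery already deployed in the paper, and yields the stronger structural conclusion that $\g$ is in fact abelian --- which makes explicit that, inside $\mathfrak{su}(N)$, the hypothesis ``solvable'' is no more general than ``abelian.'' Your remark about implicitly assuming $\g\neq 0$ is also a fair point that the paper glosses over, since for $\g=0$ no anchor map exists at all.
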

\begin{proof}
    If $\g$ is solvable, then by Lie's theorem the matrices in $\g$ have a common eigenvector $\hat{v}$. And since solvable Lie algebras are not semisimple, \ref{thm:riemannian.criterion.matrix} implies that there exists a metric anchor map $\varphi:\g\rightarrow\C^N$ such that $\big((\Mat{N},\g_{\pi},\C^N,\varphi),h\big)$ is pseudo-Riemannian.
\end{proof}

Below we give a simple example highlighting that both of the conditions in Theorem \ref{thm:riemannian.criterion.matrix} need to be checked separately when investigating whether a metric pre-calculus $\big((\Mat{N},\g_{\pi},\C^N),h\big)$ can be made into a pseudo-Riemannian calculus, as there exist semisimple matrix Lie algebras whose elements share a common eigenvector, as well as matrix Lie algebras that are not semisimple and whose elements do not share a common eigenvector.

\begin{example}\label{ex:matrix.case.illustration}
Let $(B_{\A},h)=\big((\Mat{N},\g_{\pi},\C^N),h\big)$ be a metric pre-calculus, let $\sigma_1,\sigma_2,\sigma_2\in\mathfrak{su}(2)$ be given by 
\begin{equation*}
    \sigma_1=\begin{pmatrix}
    0 & i\\
    i & 0
    \end{pmatrix},\quad \sigma_2=\begin{pmatrix}
    0 & 1\\
    -1 & 0
    \end{pmatrix},\quad \sigma_3=\begin{pmatrix}
    i & 0\\
    0 & -i
    \end{pmatrix},
\end{equation*}
and let $\One_2$ denote the identity matrix in $\Mat{2}$. We know that $\R\gen{\sigma_1,\sigma_2,\sigma_3}=\mathfrak{su}(2)$ is semisimple, and it is straightforward to check that there is no common eigenvector for $\sigma_1$, $\sigma_2$ and $\sigma_3$. These matrices enable us to construct a semisimple matrix Lie algebra whose elements have a common eigenvector, as well as a matrix Lie algebra that is not semisimple, but whose elements do not share a common eigenvector. We make the following choices:
\begin{align*}
    D_0&=\begin{pmatrix} i\cdot\One_2 & 0 \\ 0 & -i\cdot\One_2\end{pmatrix}\\
    D_j&=\begin{pmatrix} \sigma_j & 0 \\ 0 & \sigma_j\end{pmatrix},\quad j=1,2,3,\\
    D'_j&=\begin{pmatrix} 0 & 0 \\ 0 & \sigma_{j}\end{pmatrix},\quad j=1,2,3,
\end{align*}
and create three distinct Lie subalgebras of $\mathfrak{su}(4)$:
\begin{align*}
    \g^a&=\R\gen{D'_1,D'_2,D'_3}\simeq\mathfrak{su}(2),\\
    \g^b&=\R\gen{D_0,D_1,D_2,D_3}\simeq \mathfrak{ab}(1)\oplus\mathfrak{su}(2),\\
    \g^c&=\R\gen{D_0,D'_1,D'_2,D'_3}\simeq \mathfrak{ab}(1)\oplus\mathfrak{su}(2)\simeq \g^b.
\end{align*}
The necessary and sufficient conditions listed in Theorem \ref{thm:riemannian.criterion.matrix} on the matrix Lie algebra in question are that it is not semisimple and that all matrices have a common eigenvector. It is straightforward to check that $\g^b\simeq \g^c$ are not semisimple and that all matrices in $\g^a$ and in $\g^c$ share a common eigenvector $\hat{v}_0=(1,0,0,0)\in\C^4$. It is equally straightforward to check that $\g^a$ is semisimple and that there is no common eigenvector to all matrices in $\g^b$. Hence, Theorem \ref{thm:riemannian.criterion.matrix} tells us that there exists a metric anchor map $\varphi^c:\g^c\rightarrow\C^4$ such that the real metric calculus $\big((\Mat{4},\g^c_{\pi},\C^4,\varphi^c),h\big)$ is pseudo-Riemannian, and that no real metric calculus of the form $\big((\Mat{4},\g^a_{\pi},\C^4,\varphi),h\big)$ or $\big((\Mat{4},\g^b_{\pi},\C^4,\varphi^c),h\big)$ is pseudo-Riemannian.

More explicitly, a pseudo-Riemannian calculus $\big((\Mat{4},\g^c_{\pi},\C^4,\varphi^c),h\big)$ is given by the anchor map $\varphi^c(D_1)=\hat{v}_0=(1,0,0,0)$, $\varphi^c(D'_j)=0$ for $j=1,2,3$. Letting $\lambda_D$ denote the eigenvalue of $D\in\g^c$ with respect to $\hat{v}_0$ (i.e., $\hat{v}_0 D=\lambda_D\hat{v}_0$), the Levi-Civita connection $\nabla$ on $\C^N$ is given by
\begin{equation*}
    \nabla_{D} v=\lambda_D v- v D,\quad D\in\g^c.
\end{equation*}
\end{example}

\subsection{Generalizing the matrix case}
We now let $\A$ be an arbitrary unital $^*$-algebra such that $\C^N$ is a simple (right) $\A$-module. When considering metric pre-calculi $(B_{\A},h)=\big((\A,\g_{\pi},\C^N),h\big)$ where $\C^N$ is a simple and projective $\A$-module the discussion is similar to the case where $\A=\Mat{N}$, although certain care must be taken when considering the Lie algebra $\g$. We outline the details below.

For the sake of notational convenience in several of the proofs below, we let $\rho:\A\rightarrow\Mat{N}$ denote the $^*$-homomorphism defined by
\begin{equation*}
    v\rho(a)=v\cdot a,\quad a\in A, v\in\C^N.
\end{equation*}
Since $\C^N$ is a simple $\A$-module, it immediately follows from the Jacobson density theorem that $\rho$ is a surjection.

We are mainly concerned with the case where $\C^N$ is simple and projective and where $h$ is an invertible metric. This case is greatly simplified by the following general lemma.
\begin{lemma}[c.f. Proposition 2.6 in \cite{a:nc.min.surfaces}]\label{lem:inv.metric}
Let $M$ be a finitely generated projective (right) $\A$-module with generators $e_1,...,e_n$ and let $h:M\times M\rightarrow \A$ be a (invertible) metric. Setting $h_{ij}=h(e_i,e_j)$, there exist $h^{ij}\in\A$ such that $(h^{ij})^*=h^{ji}$ and $e_k h^{kl}h_{li}=e_i$ for $i,j=1,...,n$.
\end{lemma}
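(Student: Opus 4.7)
The plan is to exploit the projective dual basis of $M$ together with the invertibility of $h$. Because $M$ is finitely generated projective with generators $e_1,\ldots,e_n$, the surjection $\A^n\to M$ sending the standard basis to $e_i$ splits, yielding functionals $\phi^1,\ldots,\phi^n\in M^*$ such that $m=\sum_i e_i\,\phi^i(m)$ for every $m\in M$. Since $\hat{h}:M\to M^*$ is a bijection, each $\phi^i$ has a preimage $f^i\in M$ satisfying $h(f^i,m)=\phi^i(m)$. Combining these gives the key reconstruction identity
\begin{equation*}
m=\sum_k e_k\,h(f^k,m),\qquad m\in M.
\end{equation*}

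With the $f^i$ in hand I would set $h^{ij}:=h(f^i,f^j)\in\A$. The hermiticity condition $(h^{ij})^*=h^{ji}$ is then immediate from property $(h3)$. To establish the identity $e_k h^{kl}h_{li}=e_i$, the main computation is as follows: apply the reconstruction identity to $m=f^j$ to get $f^j=\sum_k e_k h^{kj}$; then apply $h(e_l,\cdot)$ and use property $(h2)$ to obtain $h(e_l,f^j)=\sum_k h_{lk}h^{kj}$; finally take adjoints using $(h3)$ together with $(h^{ij})^*=h^{ji}$ and $h_{ij}^*=h_{ji}$ to rewrite this, after a relabeling, as $h(f^k,e_i)=\sum_l h^{kl}h_{li}$.

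The proof then closes by invoking the reconstruction identity a second time, now on $m=e_i$:
\begin{equation*}
e_i=\sum_k e_k\,h(f^k,e_i)=\sum_{k,l} e_k\,h^{kl}h_{li},
\end{equation*}
which is the claimed relation in Einstein-summation notation. The delicate point is bookkeeping with the involution when transporting data between $M$ and $M^*$: the map $\hat{h}$ is additive but only conjugate-linear over the right $\A$-action, since $\hat{h}(ma)(n)=h(ma,n)=a^*h(m,n)$, so the adjoint step has to correctly convert stars on products of $h_{lk}h^{kj}$ into the hermiticity built into the $h^{ij}$. Beyond this bookkeeping the argument is essentially automatic, being a direct use of the dual projective basis together with a single application of $\hat{h}^{-1}$.
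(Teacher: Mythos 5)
Your proof is correct. The paper itself states this lemma without proof, citing Proposition 2.6 of \cite{a:nc.min.surfaces}; your argument --- constructing the dual projective basis $\phi^1,\ldots,\phi^n$ from a splitting of $\A^n\rightarrow M$, pulling it back through $\hat{h}^{-1}$ to obtain the $f^i$, setting $h^{ij}=h(f^i,f^j)$, and then applying the reconstruction identity to $e_i$ --- is precisely the standard proof of that cited result, and the adjoint bookkeeping in the step converting $h(e_l,f^j)=h_{lk}h^{kj}$ into $h(f^k,e_i)=h^{kl}h_{li}$ is handled correctly.
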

\noindent
The above lemma enables us to give the following characterization of the metric $h$.
\begin{proposition}\label{prop:proj.module}
    Let $\C^N$ be a simple $\A$-module, and let $h$ be an invertible metric on $\C^N$. Then $\C^N$ is a projective (right) $\A$-module if and only if there exists a $c\in\R\setminus\{0\}$ such that $\rho(h(u,v))=c u^{\dagger}v$ for all $u,v\in\C^N$.
\end{proposition}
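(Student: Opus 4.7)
The plan is to reduce everything to the surjective $^*$-homomorphism $\rho:\A\to\Mat{N}$ (surjectivity being the Jacobson density observation made just above the statement). Set $\tilde h:=\rho\circ h:\C^N\times\C^N\to\Mat{N}$. Properties $(h1)$--$(h3)$ together with the identity $v\cdot a=v\rho(a)$ transfer to give $\tilde h(u,vA)=\tilde h(u,v)A$, $\tilde h(uA,v)=A^\dagger\tilde h(u,v)$ and $\tilde h(u,v)^\dagger=\tilde h(v,u)$ for $A\in\Mat{N}$, and $\tilde h$ is conjugate-linear in the first slot over $\C\cdot\mathbf{1}_{\A}$. With this setup the claim becomes: $\tilde h$ has the single-scalar form $\tilde h(u,v)=c\,u^\dagger v$ with $c\in\R\setminus\{0\}$ iff $\C^N$ is projective.

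For necessity, assume $\C^N$ is projective. The standard basis $e_1,\dots,e_N$ generates the simple module $\C^N$. By surjectivity of $\rho$, choose $a_j\in\A$ with $\rho(a_j)=E_{1j}$, so that $e_j=e_1\cdot a_j$. Using the module-linearity of $\tilde h$ on both sides, a direct computation gives $\tilde h(e_i,e_j)=E_{i1}\tilde h(e_1,e_1)E_{1j}=c\,E_{ij}$, where $c:=(\tilde h(e_1,e_1))_{11}$. Since scalar multiplication on $\C^N$ equals the $\A$-action by $\lambda\mathbf{1}_{\A}$, I then extend this to $\tilde h(u,v)=c\,u^\dagger v$ for all $u,v\in\C^N$ by expanding in the basis. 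Hermiticity $\tilde h(e_1,e_1)^\dagger=\tilde h(e_1,e_1)$ forces $c\in\R$. To rule out $c=0$, I invoke Lemma \ref{lem:inv.metric}: if $c=0$ then $\tilde h\equiv0$, so $\rho(h_{li})=0$ for every $l,i$, and applying $\rho$ to the dual-basis identity $e_k h^{kl}h_{li}=e_i$ yields $0=e_i$, a contradiction.

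For sufficiency, assume $\rho(h(u,v))=c\,u^\dagger v$ with $c\in\R\setminus\{0\}$. I verify the dual-basis criterion for projectivity using the generators $e_1,\dots,e_N$. Define $\phi_i:\C^N\to\A$ by $\phi_i(v):=(cN)^{-1}h(e_i,v)$; property $(h2)$ makes each $\phi_i$ an $\A$-linear map. The elementary identity $e_i\cdot(e_i^\dagger v)=v$ (the $i$-th row of the matrix $e_i^\dagger v$ is $v$, all other rows vanish) then gives
\[
\sum_{i=1}^N e_i\phi_i(v)=(cN)^{-1}\sum_i e_i\rho(h(e_i,v))=(cN)^{-1}\sum_i e_i(c\,e_i^\dagger v)=(cN)^{-1}\cdot cNv=v,
\]
producing a dual basis and hence projectivity of $\C^N$.

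The main obstacle will be keeping the two parallel structures straight—the $\A$-action on $\C^N$ and the induced $\Mat{N}$-action through $\rho$—and in particular using surjectivity of $\rho$ to justify writing $e_j=e_1\cdot a_j$ so that the $\tilde h(e_i,e_j)$ can be computed from $\tilde h(e_1,e_1)$ alone. Once that step is in place and Lemma \ref{lem:inv.metric} is brought in to exclude $c=0$, both directions reduce to short computations.
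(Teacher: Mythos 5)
Your proof is correct and follows essentially the same route as the paper: the scalar form $c\,u^{\dagger}v$ is obtained by transferring the hermitian form through the surjection $\rho$, the nonvanishing of $c$ is extracted from Lemma \ref{lem:inv.metric}, and projectivity is established by using $h$ itself to split a free module onto $\C^N$. The only cosmetic differences are that you spell out the computation forcing the single-scalar shape (which the paper asserts as immediate) and that you verify the dual-basis criterion with all $N$ standard generators, where the paper instead splits the rank-one surjection $\kappa(a)=c^{-1}e_1\cdot a$ using the single map $\iota(v)=h(e_1,v)$.
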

\begin{proof}
    We begin proving necessity of the given condition. Since the composition $\rho\circ h:\C^N\times\C^N\rightarrow\Mat{N}$ must be a hermitian form, this immediately implies that there exists a $c\in\R$ such that $\rho(h(u,v))=c u^{\dagger}v$. To prove that $c\neq 0$, let $e_1\neq 0$ be a generator of $\C^N$ as an $\A$-module. Then, setting $h_{11}=h(e_1,e_1)$, Lemma \ref{lem:inv.metric} implies that there exist $h^{11}\in\A$ such that $e_1\cdot h^{11} h_{11}=e_1$. Since 
    \begin{equation*}
        0\neq e_1=e_1\cdot h^{11}h_{11}=e_1\rho(h^{11})\rho(h_{11}),
    \end{equation*}
    it follows that $\rho(h_{11})=c e_1^{\dagger}e_1\neq0$, which in turn implies that $c\neq 0$.
    
    Conversely, suppose that there is a $c\in\R\neq\{0\}$ such that $\rho(h(u,v))=c u^{\dagger}v$ for all $u,v\in\C^N$.
    Let $e_1=(1,0,...,0)\in\C^N$, and consider the (module) homomorphism $\iota:\C^N\rightarrow \A$ given by
    \begin{equation*}
        \iota(v)=h(e_1,v),\quad v\in\C^N.
    \end{equation*}
    Now, let $\kappa:\A\rightarrow \C^N$ be the (module) homomorphism given by $\kappa(a)=c^{-1}e_1\cdot a$.
    Then, for every $v\in\C^N$, we get
    \begin{equation*}
        \kappa\circ\iota(v)=\kappa(h(e_1,v))=c^{-1}e_1\cdot h(e_1,v)=c^{-1}e_1 c e_1^{\dagger}v=v,
    \end{equation*}
    implying that $\kappa\circ\iota=\operatorname{id}_{\C^N}$. Hence, the exact sequence
    \begin{equation*}
        0\rightarrow \ker\kappa\hookrightarrow \A\xrightarrow{\kappa}\C^N\rightarrow 0
    \end{equation*}
    splits and $\A=\ker\kappa\oplus\operatorname{im}\iota$, where $\operatorname{im}\iota\simeq\C^N$. In other words, $\C^N$ is a direct summand of the free $\A$-module $\A$ and is therefore projective.
\end{proof}
In light of Proposition \ref{prop:proj.module} we shall assume that $\C^N$ is a simple projective $\A$-module and that $h:\C^N\times\C^N\rightarrow\A$ is an invertible metric going forward. This ensures (among other things) the following useful result:
\begin{lemma}[cf. \cite{a:nc.min.surfaces}, Corollary 3.7]\label{lem:existence.of.metric.connection}
    Let $\C^N$ be a finitely generated projective $\A$-module and let $h$ be a metric on $\C^N$. If $\g$ is a real Lie algebra of hermitian derivations on $\A$, then there exists a connection $\nabla:\g\times\C^N\rightarrow\C^N$ that is compatible with $h$.
\end{lemma}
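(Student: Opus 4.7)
The plan is to use the standard ``fix any connection and correct it'' strategy. The first step is to produce \emph{any} affine connection $\nabla_0$ on $\C^N$ using the projectivity. Pick generators $e_1,\dots,e_n$ of $\C^N$ and, via Lemma \ref{lem:inv.metric}, dual elements $h^{ij}\in\A$ with $(h^{ij})^*=h^{ji}$ and $e_k h^{kl}h_{li}=e_i$. Combining the latter identity with the expansion of an arbitrary $m\in\C^N$ in the generators, one gets the reproducing identity $m=e_k h^{kl}h(e_l,m)$, so that the Grassmann-type formula
\[
\nabla_{0,\der}\, m \;=\; e_k\,\der\!\big(h^{kl}\,h(e_l,m)\big)
\]
is well defined, additive in $m$ and $\R$-linear in $\der$. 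The Leibniz rule $\nabla_{0,\der}(ma)=(\nabla_{0,\der}m)a+m\der(a)$ then follows from the Leibniz rule for $\der$ on $\A$ together with the reproducing identity.

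Next I measure the failure of metric compatibility. Define
\[
A_\der(m,n)\;=\;\der\big(h(m,n)\big)-h(\nabla_{0,\der}m,n)-h(m,\nabla_{0,\der}n).
\]
A direct calculation using $(h1)$--$(h3)$ and the Leibniz rule gives $A_\der(m,na)=A_\der(m,n)\,a$, and similarly $A_\der(ma,n)=a^*A_\der(m,n)$; hermiticity of $\der$ combined with $(h3)$ gives $A_\der(m,n)^*=A_\der(n,m)$. Hence, for each $m$, $n\mapsto A_\der(m,n)$ is an element of $M^*$, and invertibility of $\hat h$ allows us to define $\tilde A_\der:\C^N\to\C^N$ by
\[
h(\tilde A_\der\, m,n)\;=\;A_\der(m,n),\qquad m,n\in\C^N.
\]

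Finally I set $\nabla_\der \;=\; \nabla_{0,\der}+\tfrac{1}{2}\,\tilde A_\der$. For this to be a connection I need $\tilde A_\der$ to be right-$\A$-linear; this is forced by
\[
h\big(\tilde A_\der(ma),n\big)=A_\der(ma,n)=a^*A_\der(m,n)=a^* h(\tilde A_\der m,n)=h\big((\tilde A_\der m)a,\,n\big),
\]
using $h(m'a,n)=a^*h(m',n)$, together with injectivity of $\hat h$. For metric compatibility I need the self-adjointness $h(\tilde A_\der m,n)=h(m,\tilde A_\der n)$, which follows from the identity $A_\der(n,m)^*=A_\der(m,n)$: indeed $h(m,\tilde A_\der n)=h(\tilde A_\der n,m)^*=A_\der(n,m)^*=A_\der(m,n)$. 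Combining these,
\[
h(\tfrac{1}{2}\tilde A_\der m,n)+h(m,\tfrac{1}{2}\tilde A_\der n)=A_\der(m,n),
\]
which cancels the defect and yields $\der h(m,n)=h(\nabla_\der m,n)+h(m,\nabla_\der n)$, as required. The only real obstacle is the careful bookkeeping for $\A$-linearity and self-adjointness of $\tilde A_\der$ in the presence of the $^*$-twist in the first slot of $h$; once these two symmetry properties are in place, the remainder of the argument is mechanical.
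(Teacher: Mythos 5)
The paper does not prove this lemma itself --- it imports it from \cite{a:nc.min.surfaces}, Corollary~3.7 --- so there is no in-paper proof to compare against. Your argument is correct and is the standard one for this kind of statement: the reproducing identity $m=e_kh^{kl}h(e_l,m)$ makes the Grassmann-type formula a well-defined connection, the defect $A_\der$ is right $\A$-linear in the second slot and $^*$-twisted linear in the first (this is where hermiticity of $\der$ is genuinely used, to cancel $\der(a^*)h(m,n)$ against $\der(a)^*h(m,n)$), invertibility of $\hat h$ turns it into a self-adjoint module endomorphism $\tilde A_\der$, and adding $\tfrac12\tilde A_\der$ preserves the Leibniz rule while exactly cancelling the defect. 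All the checks you flag as the ``only real obstacles'' do go through, so the proof is complete.
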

We use this fact to show that there is a unique representation $\bar{\rho}:\g\rightarrow\mathfrak{su}(N)$ that satisfies
\begin{equation*}
    \rho(\der(a))=[\bar{\rho}(\der),\rho(a)],\quad a\in\A,\quad\der\in\g,
\end{equation*}
which can be used to give a convenient parameterization of all affine connections that need to be considered.
\begin{proposition}\label{prop:lie.alg.rep.and.connection.parameterization}
    Let $\C^N$ be a simple and projective $\A$-module, and let $h$ be an invertible metric on $\C^N$. If $\g\subseteq\Der(\A)$ is a finite-dimensional real Lie algebra of hermitian derivations with basis $\der_1,...,\der_n$, then there exists a unique representation $\bar{\rho}:\g\rightarrow\mathfrak{su}(N)$ such that $\rho(\der(a))=[\bar{\rho}(\der),\rho(a)]$ for all $a\in\A$ and $\der\in\g$. Moreover, every affine connection $\nabla:\g\times\C^N\rightarrow\C^N$ is of the form
    \begin{equation*}
        \nabla_{\der_j} v=v(-\bar{\rho}(\der_j)+t_j\One_N),\quad v\in\C^N,
    \end{equation*}
    where $t_j\in\C$ for $j=1,...,n$. Furthermore, the connection $\nabla$ is compatible with $h$ if and only if $\operatorname{Re}(t_j)=0$ for $j=1,...,n$.
\end{proposition}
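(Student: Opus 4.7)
The plan is to exploit Lemma \ref{lem:existence.of.metric.connection} to bootstrap: once we have any affine connection $\nabla:\g\times\C^N\to\C^N$, we can read off the matrices $\bar{\rho}(\der)$ from it, and then show that every other connection is obtained from $\bar{\rho}$ by shifting each $\nabla_{\der_j}$ by a complex scalar.

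First I would fix any affine connection $\nabla$ guaranteed by Lemma \ref{lem:existence.of.metric.connection}. Since $\nabla_{\der}$ is additive and satisfies the Leibniz rule, restricting to scalars $\lambda\in\C\subseteq\A$ (on which $\der$ vanishes) shows $\nabla_{\der}$ is $\C$-linear, so $\nabla_{\der}v=vX_{\der}$ for a unique $X_{\der}\in\Mat{N}$. Writing $v\cdot a=v\rho(a)$ and applying Leibniz to $\nabla_{\der}(va)$ yields $v\bigl(\rho(a)X_{\der}-X_{\der}\rho(a)-\rho(\der(a))\bigr)=0$ for every $v\in\C^N$ and $a\in\A$, i.e.
\[
\rho(\der(a))=[-X_{\der},\rho(a)].
\]
Since $X_{\der}$ is only determined up to an additive complex scalar, I set $\bar{\rho}(\der):=-X_{\der}+c_{\der}\One_N$ with $c_{\der}\in\C$ chosen so that $\bar{\rho}(\der)$ is trace-free. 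To show $\bar{\rho}(\der)\in\mathfrak{su}(N)$, I would take the $\dagger$-adjoint of the relation $\rho(\der(a))=[\bar{\rho}(\der),\rho(a)]$ and compare with the identity $\rho(\der(a))^{\dagger}=\rho(\der(a^*))=[\bar{\rho}(\der),\rho(a^*)]$ that follows from $\der$ being hermitian and $\rho$ being a $^*$-homomorphism; this forces $\bar{\rho}(\der)+\bar{\rho}(\der)^{\dagger}$ to commute with all of $\rho(\A)=\Mat{N}$, hence to be scalar, and combined with trace-freeness to vanish.

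Next I would establish that $\bar{\rho}$ is a Lie algebra representation via the following uniqueness principle: if $Y\in\mathfrak{su}(N)$ satisfies $[Y,\rho(a)]=0$ for every $a\in\A$, then $Y$ is central in $\Mat{N}$ (by surjectivity of $\rho$) and hence a scalar; trace-freeness then forces $Y=0$. The $\R$-linearity of $\bar{\rho}$ and the identity $\bar{\rho}([\der_1,\der_2])=[\bar{\rho}(\der_1),\bar{\rho}(\der_2)]$ both reduce to checking that the relevant commutators with $\rho(a)$ agree for every $a$, which follows from the defining relation of $\bar{\rho}$ together with the Jacobi identity. Uniqueness of $\bar{\rho}$ itself also follows from the same principle. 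For the parameterization of connections, any affine connection $\nabla'$ produces via the same Leibniz argument matrices $Y_j\in\Mat{N}$ with $\rho(\der_j(a))=[-Y_j,\rho(a)]$; subtracting the defining relation of $\bar{\rho}(\der_j)$ shows $Y_j+\bar{\rho}(\der_j)$ is central in $\Mat{N}$, hence equals $t_j\One_N$ for some $t_j\in\C$, giving $Y_j=-\bar{\rho}(\der_j)+t_j\One_N$ as claimed.

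Finally, for the metric compatibility criterion, I would apply $\rho$ to the identity $\der_j(h(u,v))=h(\nabla_{\der_j}u,v)+h(u,\nabla_{\der_j}v)$ and invoke $\rho(h(u,v))=cu^{\dagger}v$ from Proposition \ref{prop:proj.module}. The left-hand side becomes $c[\bar{\rho}(\der_j),u^{\dagger}v]$, and expanding the right-hand side using $\bar{\rho}(\der_j)^{\dagger}=-\bar{\rho}(\der_j)$ cancels this commutator and leaves $c(t_j+\bar{t}_j)u^{\dagger}v$, which vanishes for all $u,v\in\C^N$ iff $\Re(t_j)=0$. The main conceptual obstacle is the first part: since $\rho$ need not be injective, there is no a priori way to transport $\der$ to a derivation on $\Mat{N}$; the trick is to borrow $X_{\der}$ from a connection, whose existence Lemma \ref{lem:existence.of.metric.connection} supplies precisely because $\C^N$ is projective.
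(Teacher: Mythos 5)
Your construction of $\bar{\rho}$, the proof that it lands in $\mathfrak{su}(N)$ and is a unique Lie algebra representation, and the parameterization of all affine connections all follow essentially the same route as the paper: borrow the matrices $X_{\der}$ from a connection supplied by Lemma \ref{lem:existence.of.metric.connection}, normalize to trace zero, and use surjectivity of $\rho$ together with the ``central and trace-free implies zero'' principle throughout. That part is correct.

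There is, however, a genuine gap in your treatment of the final claim. Metric compatibility is the identity $\der_j(h(u,v))=h(\nabla_j u,v)+h(u,\nabla_j v)$ \emph{in $\A$}, whereas your argument only computes the image of its defect under $\rho$, obtaining $c(t_j+\bar{t}_j)u^{\dagger}v$. This correctly proves necessity (compatibility forces the $\rho$-image of the defect to vanish, hence $\Re(t_j)=0$), but it does not prove sufficiency: if $\Re(t_j)=0$ you have only shown that the defect lies in $\ker\rho$, and $\rho$ need not be injective — a point you yourself raise in another context. The paper closes this hole by working in $\A$ rather than in $\Mat{N}$: it picks an antihermitian lift $d_j=-d_j^*\in\A$ with $\rho(d_j)=\bar{\rho}(\der_j)$, computes $h(\nabla_j u,v)+h(u,\nabla_j v)=[d_j,h(u,v)]+(t_j+\bar{t}_j)h(u,v)$ directly from the axioms of the hermitian form, and then uses the compatible connection guaranteed by Lemma \ref{lem:existence.of.metric.connection} to identify $\der_j(h(u,v))$ with $[d_j,h(u,v)]$ as elements of $\A$. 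With that anchor, the defect of an arbitrary connection equals $-2\Re(t_j)h(u,v)$ exactly, and invertibility of $h$ gives the biconditional. You need some version of this step; the $\rho$-image alone is not enough.
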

\begin{proof}
    Since $h$ is an invertible metric on the projective $\A$-module $\C^N$, Lemma \ref{lem:existence.of.metric.connection} implies that there exists at least one metric connection on $\g\times\C^N$. Hence,
    let $\nabla:\g\times \C^N\rightarrow \C^N$ denote an affine connection on $\C^N$. Since  $\nabla_j=\nabla_{\partial_j}$ is a linear map from $\C^N$ to itself, it follows that there are matrices $X_1,...X_n$ such that
\begin{equation*}
    \nabla_j v= vX_j;
\end{equation*}
by the Leibniz condition for $\nabla$, it follows that
\begin{equation*}
    v\rho(a)X_j=\nabla_j v\rho(a)=\nabla_j (v\cdot a)=(\nabla_j v)\cdot a+v\cdot\der_j(a)=vX_i\rho(a)+v\rho(\der_j(a))
\end{equation*}
for all $v\in\C^m$ and all $a\in\A$,
which is equivalent to 
\begin{equation}\label{eq:rho.of.derivation}
    \rho(\partial_j(a))=[\rho(a), X_j]=[-X_j,\rho(a)]
\end{equation}
for all $a\in\A$. This together with the Jacobi identity implies
\begin{align*}
    [\rho(a),[X_i,X_j]]&=-[X_i,[X_j,\rho(a)]]-[X_j,[\rho(a),X_i]]\\
    &=[X_i,\rho(\der_j(a))]-[X_j,\rho(\der_i(a))]=[\rho(\der_i(a)),X_j]-[\rho(\der_j(a)),X_i]\\
    &=\rho((\der_j\der_i-\der_i\der_j)(a))=\rho([\der_j,\der_i](a))=\rho(f^k_{ji}\der_k(a))=[\rho(a),f^k_{ji} X_k].
\end{align*}

The matrices $X_j$ enable us to construct a representation $\bar{\rho}:\g\rightarrow\mathfrak{su}(N)$ in the following way.
For each $j=1,...,n$, let $D_j=-X_j+t_j\One_N$, where $t_j=\frac{\tr(X_j)}{N}$ so that $\tr(D_j)=0$. Then, by Equation (\ref{eq:rho.of.derivation}), $\rho(\der_j(a))=[D_j,\rho(a)]$ for each $j$, and moreover we have that
\begin{equation*}
    [[D_i,D_j],\rho(a)]=[[X_i,X_j],\rho(a)]=[f^k_{ji}X_k,\rho(a)]=[f^k_{ij}D_k,\rho(a)],
\end{equation*}
showing that $[D_i,D_j]=f^k_{ij}D_k$, since $\rho$ is surjective and $\tr(D_k)=0$ for $k=1,...,n$.
Since $\der_i=\der_i^*$ (i.e., $(\der_i(a^*))^*=\der_i(a)$ for all $a\in\A$), it follows that
\begin{align*}
    [D_i,\rho(a)]&=\rho(\der_i(a))=\rho(\der^*_i(a))=\rho(\der_i(a^*)^*)=[D_i,\rho(a^*)]^{\dagger}\\
    &=[D_i,\rho(a)^{\dagger}]^{\dagger}=-[D_i^{\dagger},\rho(a)]=[-D_i^{\dagger},\rho(a)]
\end{align*}
for all $a\in\A$, where $\dagger$ denotes the hermitian transpose of a matrix. This is equivalent to $D_i+D_i^{\dagger}$ commuting with every element in $\rho(\A)$.
Since $\rho$ is surjective, this implies that $D_i+D_i^{\dagger}=\alpha_i\One_N$ for some $\alpha_i\in\R$ for every $i=1,...,n$. However, since the traces of both $D_i$ and $D_i^{\dagger}$ vanish it follows that each $\alpha_i=0$, i.e., that $D_i\in\mathfrak{su}(N)$ for $i=1,...,n$. Hence, by the above argument it follows that $\bar{\rho}:\g\rightarrow\mathfrak{su}(N)$ is defined by $\bar{\rho}(\der_i)=D_i$, $i=1,...,n$.

To prove uniqueness of $\bar{\rho}$, let $\rho':\g\rightarrow\mathfrak{su}(N)$ be another representation such that $\rho(\der(a))=[\rho'(\der),\rho(a)]$ for all $a\in\A$ and $\der\in\g$. Then it follows that
\begin{equation*}
    [\rho'(\der),\rho(a)]=[\bar{\rho}(\der),\rho(a)]\Leftrightarrow [\rho'(\der)-\bar{\rho}(\der),\rho(a)]=0,
\end{equation*}
implying that $\rho'(\der)-\bar{\rho}(\der)$ is a multiple of the identity matrix. And since $tr(\rho'(\der))=tr(\bar{\rho}(\der))=0$, it follows that $\rho'(\der)=\bar{\rho}(\der)$ for $\der\in\g$. 

Now, using $\bar{\rho}$ we find that
\begin{equation*}
    \nabla_j v=v X_j=v (-D_j+t_j\One_N)=v(-\bar{\rho}(\der_j)+t_j\One_N),\quad v\in\C^N,
\end{equation*}
and it is straightforward to see that the above formula defines an affine connection $\nabla$ for every $t_j\in\C$.

Now, let $\nabla$ be an affine connection that is compatible with $h$. This implies that
\begin{align*}
    \rho(\der_j(h(u,v)))&=\rho(h(\nabla_j u,v)+h(u,\nabla_j v))\\&=\rho(h(u (-\bar{\rho}(\der_j)+t_j\One_N),v)+h(u,v (-\bar{\rho}(\der_j)+t_j\One_N)))\\
    &=\rho(h(u (-\bar{\rho}(\der_j)+t_j\One_N),v))+\rho(h(u,v (-\bar{\rho}(\der_j)+t_j\One_N)))\\
    &=c\left( (-\bar{\rho}(\der_j)+t_j\One_N)^{\dagger} u^{\dagger}v+u^{\dagger}v (-\bar{\rho}(\der_j)+t_j\One_N)\right)\\
    &=c\left(\bar{\rho}(\der_j)u^{\dagger}v-u^{\dagger}v\bar{\rho}(\der_j)\right)+c(\bar{t}_j+t_j)u^{\dagger}v\\
    &=[\bar{\rho}(\der_j),\rho(h(u,v))]+c(\bar{t}_j+t_j)u^{\dagger}v.
\end{align*}
On the other hand, since $\rho(\der(a))=[\bar{\rho}(\der),\rho(a)]$ for all $\der\in\g$ and all $a\in\A$,
\begin{align*}
    \rho(\der_j(h(u,v)))=[\bar{\rho}(\der_j),\rho(h(u,v))],
\end{align*}
and hence $c(\bar{t}_j+t_j)u^{\dagger}v=0$ for all $u,v\in\C^N$, which is true if and only if $\bar{t}_j=-t_j$ for $j=1,...,n$.

Finally, to prove that every connection on the form
\begin{equation*}
    \nabla_j v=v X_j=v (-D_j+t_j\One_N)=v(-\bar{\rho}(\der_j)+t_j\One_N),\quad v\in\C^N,
\end{equation*}
is metric if $\operatorname{Re}(t_j)=0$ for $j=1,...,n$, note that Lemma \ref{lem:existence.of.metric.connection} implies that there exists at least one choice $\hat{t}_1,...,\hat{t}_n\in\C$ such that the connection
\begin{equation*}
    \hat{\nabla}_j v=v(-\bar{\rho}(\der_j)+\hat{t}_j\One_N),\quad v\in\C^N,
\end{equation*}
is compatible with $h$. Moreover, by the above argument it follows that $\Re(\hat{t}_j)=0$ for $j=1,...,n$.
Since $\rho$ is surjective and $\rho(\der_j(a))=[\bar{\rho}(\der_j),\rho(a)]$ for all $a\in\A$, it follows that there are $d_j=-d_j^*\in\A$ such that $\rho(d_j)=\bar{\rho}(\der_j)$. Hence, 
\begin{align*}
    h(\hat{\nabla}_j u,v)+h(u,\hat{\nabla}_j v)&=h(u(-\bar{\rho}(\der_j)+\hat{t}_j\One),v)+h(u,v(-\bar{\rho}(\der_j)+\hat{t}_j\One))\\
    &=-d^*_j h(u,v)-h(u,v)d_j+(\hat{t}_j+\bar{\hat{t}}_j)h(u,v)\\
    &=d_j h(u,v)-h(u,v)d_j+0=[d_j,h(u,v)].
\end{align*}
Since $\hat{\nabla}$ is compatible with $h$, it follows that
\begin{equation*}
    \der_j(h(u,v))=h(\hat{\nabla}_j u,v)+h(u,\hat{\nabla}_j v)=[d_j,h(u,v)].
\end{equation*}
Let $\nabla:\g\times\C^N\rightarrow\C^N$ be a connection defined by 
\begin{equation*}
    \nabla_j v=v(-\bar{\rho}(\der_j)+t_j\One_N),\quad v\in\C^N,
\end{equation*}
such that
$\Re(t_j)=0$, $j=1,...,n$. As was done explicitly for $\hat{\nabla}$, it is straightforward to show by direct computation that
\begin{equation*}
    h(\nabla_j u,v)+h(u,\nabla_j v)=[d_j,h(u,v)],
\end{equation*}
and since this expression is equal to $\der_j(h(u,v))$
for all $u,v\in\C^N$ and $j\in\{1,...,n\}$ it follows that $\nabla$ is compatible with $h$ as well. The statement follows. 
\end{proof}

Given a real Lie algebra of hermitian derivations, we need to consider the possible metric anchor maps $\varphi:\g\rightarrow\C^N$. 
Since the metric $h:M\times M\rightarrow \A$ is such that
\begin{equation*}
    u\cdot h(v,w)=c u(v^{\dagger}w),\quad c\in\R\setminus\{0\},
\end{equation*}
and since $v^{\dagger}w=w^{\dagger}v=(v^{\dagger}w)^{\dagger}$ if and only if there are $\mu_v,\mu_w\in\R$ and $0\neq \hat{v}_0\in\C^N$ such that $v=\mu_v \hat{v}_0$ and $w=\mu_w \hat{v}_0$, it follows by an argument analogous to the matrix case that $\varphi:\g\rightarrow \C^N$ is a metric anchor map if and only if $\varphi(\der_i)=\mu_i \hat{v}_0$ for a nonzero $\hat{v}_0$ and $\mu_i\in\R$ that are not all zero (note that $h(\mu_i\hat{v}_0,\mu_j\hat{v}_0)=\mu_i\mu_j h(\hat{v}_0,\hat{v}_0)$ is hermitian for all choices of $\mu_i,\mu_j\in\R$, ensuring sufficiency of the given condition). 

\begin{lemma}\label{lem:RCC.cond}
Let $(\A,\g_{\pi},\C^N,h)$ be a metric pre-calculus such that $\C^N$ is a simple projective (right) $\A$-module, and let $\varphi:\der_i\mapsto \mu_i \hat{v}_0$ be a metric anchor map (i.e., all $\mu_i$ are real), yielding a real metric calculus $(C_{\A},h)$.
Given a metric connection $\nabla$, $(C_{\A},h,\nabla)$ is a real connection calculus iff $\nabla_{\der} \hat{v}_0=0$, $\der\in\g$.
\end{lemma}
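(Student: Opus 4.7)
The plan is to imitate the proof of Lemma \ref{lem:RCC.cond.matrix}, using the $^*$-homomorphism $\rho:\A\to\Mat{N}$ to transport the hermiticity condition from $\A$ to $\Mat{N}$, at which point the matrix-case argument applies almost verbatim. Sufficiency is immediate: if $\nabla_{\der}\hat{v}_0=0$ for every $\der\in\g$, then since the real scalars $\mu_i$ are annihilated by derivations, the Leibniz rule yields $\nabla_{\der}\varphi(\der')=\mu_{\der'}\nabla_{\der}\hat{v}_0=0$, so $h(\nabla_{\der}\varphi(\der'),\varphi(\der''))=0$ is trivially hermitian.

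For necessity, fix a basis $\der_1,\dots,\der_n$ of $\g$ and appeal to Proposition \ref{prop:lie.alg.rep.and.connection.parameterization} to write $\nabla_i v=vX_i$ with $X_i=-\bar{\rho}(\der_i)+t_i\One_N$ and $\operatorname{Re}(t_i)=0$. Since $\bar{\rho}(\der_i)\in\su{N}$, this forces $X_i^{\dagger}=-X_i$. Choose $j,k$ with $\mu_j\mu_k\neq 0$, which exist because $\varphi$ is nontrivial. The real connection calculus condition then reduces to hermiticity of $h(\nabla_i\hat{v}_0,\hat{v}_0)\in\A$; applying $\rho$ (which is a $^*$-homomorphism) and using Proposition \ref{prop:proj.module} to rewrite $\rho\circ h$ in the form $c\cdot u^{\dagger}v$ with nonzero $c\in\R$, this becomes
\begin{equation*}
X_i^{\dagger}\hat{v}_0^{\dagger}\hat{v}_0 \;=\; \hat{v}_0^{\dagger}\hat{v}_0\, X_i.
\end{equation*}
Normalizing $\hat{v}_0$ so that $\hat{v}_0\hat{v}_0^{\dagger}=1$ (absorbing the factor into the $\mu_i$, which remain real), one reads off that $\hat{v}_0$ is an eigenvector of $X_i$ with eigenvalue $\lambda_i=\hat{v}_0 X_i^{\dagger}\hat{v}_0^{\dagger}$. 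Antihermiticity of $X_i$ then gives $\lambda_i=-\lambda_i=0$, so $\nabla_i\hat{v}_0=0$, and by $\R$-linearity of $\nabla$ in its first argument this extends to all of $\g$.

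The only real obstacle is bookkeeping: one must verify that $\rho$ translates the $\A$-valued hermiticity into matrix hermiticity (immediate from $\rho$ being a $^*$-homomorphism) and invoke Proposition \ref{prop:proj.module} at the right moment to put $\rho\circ h$ into the standard inner-product form. Once those two ingredients are in place, the matrix argument of Lemma \ref{lem:RCC.cond.matrix} transfers without substantive change.
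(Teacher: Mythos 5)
Your proposal is correct and is exactly the route the paper takes: its proof of this lemma consists of the single remark that the argument is completely analogous to Lemma \ref{lem:RCC.cond.matrix}, and your write-up simply makes that analogy explicit by using the surjective $^*$-homomorphism $\rho$ together with Propositions \ref{prop:proj.module} and \ref{prop:lie.alg.rep.and.connection.parameterization} to reduce to the matrix computation. The details you supply (antihermiticity of $X_i$ from $\operatorname{Re}(t_i)=0$, the eigenvalue argument forcing $\lambda_i=0$) are the right ones.
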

\begin{proof}
The argument is completely analogous to the proof of Lemma \ref{lem:RCC.cond.matrix}.
\end{proof}

Before stating the main result of this section, we give a brief description of the torsion $T_{\varphi}$ of a metric connection $\nabla$ given a metric anchor map $\varphi:\der_j\mapsto\mu_j\hat{v}_0$. As in the matrix case we get that
\begin{align*}
    T_{\varphi}(\der_i,\der_j)&=\nabla_i \varphi(\der_j)-\nabla_j \varphi(\der_i)-\varphi([\der_i,\der_j])\\
    &=\hat{v}_0(\mu_j (-\bar{\rho}(\der_i)+t_i\One_N)-\mu_i (-\bar{\rho}(\der_j)+t_j\One_N)-\mu_k f^k_{ij}\One),
\end{align*}
where $[\der_i,\der_j]=\der_k f^k_{ij}$. Like in the matrix case, we have that the matrix
\begin{equation*}
    \mu_j (-\bar{\rho}(\der_i)+t_i\One_N)-\mu_i (-\bar{\rho}(\der_j)+t_j\One_N)
\end{equation*}
is antihermitian, implying that the torsion cannot vanish unless $\mu_k f^k_{ij}=0$. Hence, if $\g$ is semisimple, Lemma \ref{lem:g.is.semisimple} immediately implies that the torsion does not vanish, in analogy with the matrix case. 

However, the general case is not completely analogous to the matrix case since the Levi decomposition $\g=\g^r\oplus\g^{ss}$ can no longer be assumed to be a direct sum, and hence it is assumed to be a semidirect sum going forward. The consequence of this is that the generalization of Theorem \ref{thm:riemannian.criterion.matrix} given below requires a consideration of a larger set of necessary and sufficient conditions compared to the matrix case.
\begin{proposition}
Let $((\A,\g_{\pi},\C^N),h)$ be a metric pre-calculus where $\C^N$ is a simple projective (right) $\A$-module, and let $\der_1,...,\der_{n^r}$ and $\der'_1,...,\der'_{n^{ss}}$ be bases for $\g^r$ and $\g^{ss}$, respectively, where $\g=\g^r\oplus\g^{ss}$ is the Levi decomposition. Then there exists a metric anchor map $\varphi$ such that $((\A,\g_{\pi},\C^N,\varphi),h)$ is pseudo-Riemannian if and only if there exists a common eigenvector $\hat{v}_0\in\C^N$ to all $D=\bar{\rho}(\der)$ for $\der\in\g$ and if there exists a nontrivial solution $\mu_1,...,\mu_{n^r}\in\R$ to the linear system
of equations
\begin{equation*}
    \mu_k r^k_{ij}=0,\quad \mu_k s^k_{pq}=0, \qquad i,j,p\in\{1,...,n^r\},\quad q\in\{1,...,n^{ss}\},
\end{equation*}
 where $[\der_i,\der_j]=r^k_{ij}\der_k$ and $[\der_p,\der'_q]=s^k_{pq}\der_k$ and $k$ ranges from $1$ to $n^r$.
\end{proposition}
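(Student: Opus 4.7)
The plan is to proceed in close analogy with Theorem \ref{thm:riemannian.criterion.matrix}, with two adjustments: $\mathfrak{su}(N)$ is replaced by $\bar{\rho}(\g)$, and the Levi decomposition $\g=\g^r\oplus\g^{ss}$ is only a semidirect sum. Proposition \ref{prop:lie.alg.rep.and.connection.parameterization} and Lemma \ref{lem:RCC.cond} already handle metric compatibility and the real-connection-calculus condition, so the remaining work is to read off torsion-freeness.

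For necessity, assume $((\A,\g_{\pi},\C^N,\varphi),h,\nabla)$ is pseudo-Riemannian. By Proposition \ref{prop:lie.alg.rep.and.connection.parameterization} the connection has the form $\nabla_{\der}v=v(-\bar{\rho}(\der)+i\lambda_{\der}\One_N)$ with $\lambda_{\der}\in\R$ depending linearly on $\der$, and Lemma \ref{lem:RCC.cond} forces $\nabla_{\der}\hat{v}_0=0$, which is precisely the statement that $\hat{v}_0$ is a common eigenvector of every $\bar{\rho}(\der)$ with eigenvalue $i\lambda_{\der}$. Since $\varphi(\der')$ is always a real scalar multiple of $\hat{v}_0$, this yields $\nabla_{\der}\varphi(\der')=0$, so the torsion collapses to $T_{\varphi}(\der,\der')=-\varphi([\der,\der'])$ for all $\der,\der'\in\g$. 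Writing $\varphi(\der_i)=\mu_i\hat{v}_0$ and $\varphi(\der'_q)=\nu_q\hat{v}_0$, and using that $\g^r$ is an ideal so that $[\der_p,\der'_q]\in\g^r$, the three basis pair combinations give
$T_{\varphi}(\der_i,\der_j)=-\mu_k r^k_{ij}\hat{v}_0$, $T_{\varphi}(\der_p,\der'_q)=-\mu_k s^k_{pq}\hat{v}_0$, and $T_{\varphi}(\der'_p,\der'_q)=-\nu_k u^k_{pq}\hat{v}_0$, where $[\der'_p,\der'_q]=u^k_{pq}\der'_k$ is the bracket in $\g^{ss}$.

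Applying Lemma \ref{lem:g.is.semisimple} to $\g^{ss}$ then forces $\nu_k=0$ for every $k$; this is the essential new ingredient, replacing the direct-sum splitting used in the matrix case. Since $\varphi$ must be a nonzero anchor map and $\nu\equiv 0$, the $\mu_i$ form a nontrivial solution to the remaining two families of equations, which are exactly the stated linear system. For sufficiency one simply reverses this: given $\hat{v}_0$ with $\bar{\rho}(\der)\hat{v}_0=i\lambda_{\der}\hat{v}_0$ and a nontrivial solution $\mu_1,\ldots,\mu_{n^r}$, define $\varphi(\der_i)=\mu_i\hat{v}_0$, $\varphi(\der'_q)=0$, and $\nabla_{\der}v=v(-\bar{\rho}(\der)+i\lambda_{\der}\One_N)$; then $\varphi$ is a metric anchor map, $\nabla$ is compatible with $h$ by Proposition \ref{prop:lie.alg.rep.and.connection.parameterization}, $(C_{\A},h,\nabla)$ is a real connection calculus by Lemma \ref{lem:RCC.cond}, and the three torsion computations yield zero, with the $\g^{ss}$--$\g^{ss}$ case following from $\varphi|_{\g^{ss}}=0$.

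I expect no real obstacle beyond bookkeeping; the one genuinely new conceptual point compared to Theorem \ref{thm:riemannian.criterion.matrix} is recognizing that, although the semidirect-sum nature of the Levi decomposition means the cross brackets $[\der_p,\der'_q]$ need not vanish, they still land in $\g^r$ and so contribute only to the $\mu_k s^k_{pq}=0$ constraints rather than coupling $\mu$ and $\nu$ together. This, combined with Lemma \ref{lem:g.is.semisimple} forcing $\varphi|_{\g^{ss}}=0$, is what allows the whole analysis to be packaged as the single linear system in $\mu_1,\ldots,\mu_{n^r}$ stated in the proposition.
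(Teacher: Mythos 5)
Your proposal is correct and takes essentially the same route as the paper's proof: parameterize metric connections via Proposition \ref{prop:lie.alg.rep.and.connection.parameterization}, use Lemma \ref{lem:RCC.cond} to force $\nabla_{\der}\hat{v}_0=0$ so that the torsion collapses to $-\varphi([\der,\der'])$, kill $\varphi|_{\g^{ss}}$ by semisimplicity, and read off the stated linear system from the remaining brackets. The only cosmetic difference is that you deduce $\varphi|_{\g^{ss}}=0$ by applying Lemma \ref{lem:g.is.semisimple} to $\g^{ss}$, whereas the paper appeals directly to $[\g^{ss},\g^{ss}]=\g^{ss}$ --- the same fact in different clothing.
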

\begin{proof}
For sufficiency, assume that $\hat{v}_0$ is a common eigenvector to all $D=\bar{\rho}(\der)$, $\der\in\g$, and that $\mu_1,...,\mu_{n^r}\in\R$ is a nontrivial solution to the system $\mu_k r^k_{ij}=0$, $i,j\in\{1,...,n^r\}$ and $\mu_k s^k_{pq}=0$, $p\in\{1,...,n^r\}$, $q\in\{1,...,n^{ss}\}$. Letting $\lambda_{\der}$ denote the eigenvalue of $\bar{\rho}(\der)$ for $\der\in\g$, we define the connection $\nabla$ given by the formula
\begin{equation*}
    \nabla_{\der_j} v=v(-\bar{\rho}(\der_j)+\lambda_{\der_j}\One_N),\quad \nabla_{\der'_k} v=v(-\bar{\rho}(\der'_k)+\lambda_{\der'_k}\One_N),
\end{equation*}
for $j=1,...,n^r$ and $k=1,...,n^{ss}$. By Proposition \ref{prop:lie.alg.rep.and.connection.parameterization}, $\nabla$ is an affine connection that is compatible with $h$, and by construction it is clear that $\nabla_{\der}\hat{v}_0=0$ for all $\der\in\g$. Define the metric anchor map $\varphi:\g\rightarrow\C^N$ by $\varphi(\der_j)=\mu_j\hat{v}_0$ for $j=1,...,n^r$ and $\varphi(\der'_k)=0$ for $k=1,...,n^{ss}$. It is straightforward to check that $\big((\A,\g_{\pi},\C^N,\varphi),h,\nabla\big)$ is a real connection calculus and that $\nabla$ has vanishing torsion.

Next we prove necessity of the given conditions in the proposition statement. If $\varphi:\g\rightarrow\C^N$ is a metric anchor map and $\nabla$ is a Levi-Civita connection such that $\big((\A,\g_{\pi},\C^N,\varphi),h,\nabla\big)$ is a real connection calculus, then by Proposition \ref{prop:lie.alg.rep.and.connection.parameterization} there exist $t_1,...,t_{n^r},t'_1,...,t'_{n^{ss}}\in\R$ such that
\begin{equation*}
    \nabla_{\der_j} v=v(-\bar{\rho}(\der_j)+it_j\One_N),\quad \nabla_{\der'_k} v=v(-\bar{\rho}(\der'_k)+it'_k\One_N).
\end{equation*}
Moreover, there exist $\mu_j,\mu'_k\in\R$ and $0\neq\hat{v}_0\in\C^N$ such that $\varphi(\der_j)=\mu_j\hat{v}_0$, $\varphi(\der'_k)=\mu'_{k}\hat{v}_0$, $j=1,...,n^r$ and $k=1,...,n^{ss}$ such that 
\begin{equation*}
    \{\mu_1,...,\mu_{n^r}\}\cup\{\mu'_1,...,\mu'_{n^{ss}}\}\neq\{0\}.
\end{equation*}
By Lemma \ref{lem:RCC.cond} it follows that
\begin{equation*}
    0=\nabla_{\der_j}\hat{v}_0=-\hat{v}_0\bar{\rho}(\der_j)+it_j\hat{v}_0,\quad 0=\nabla_{\der'_k}\hat{v}_0=-\hat{v}_0\bar{\rho}(\der'_k)+it'_k\hat{v}_0,
\end{equation*}
implying that $\hat{v}_0$ is an eigenvector of all matrices of the form $\bar{\rho}(\der)$, $\der\in\g$.

Checking the torsion $T_{\varphi}$, Lemma \ref{lem:RCC.cond} implies that
\begin{align*}
    T_{\varphi}(\der,\der')&=\nabla_{\der}\varphi(\der')-\nabla_{\der'}\varphi(\der)-\varphi([\der,\der'])\\
    &=0-0-\varphi([\der,\der'])=-\varphi([\der,\der'])=0,\quad \der,\der'\in\g.
\end{align*}
If $\der,\der'\in\g^{ss}$, then since $\g^{ss}$ is semisimple the bracket $[\der,\der']$ could be any element of $\g^{ss}$, implying that
$\varphi(\der)=0$ for $\der\in\g^{ss}$. This implies that $\{\mu'_1,...,\mu'_{n^{ss}}\}=\{0\}$, which in turn implies that $\{\mu_1,...,\mu_{n^r}\}\neq\{0\}$. By considering the vanishing torsion, we get that
\begin{align*}
    0&=T_{\varphi}(\der_i,\der_j)=-\varphi([\der_i,\der_k])=-\hat{v}_0(\mu_k r^k_{ij}),\quad i,j\in\{1,...,n^r\},\\
    0&=T_{\varphi}(\der_p,\der'_q)=-\varphi([\der_p,\der'_q])=-\hat{v}_0(\mu_k s^k_{pq}),\quad p\in\{1,...,n^r\}, q\in\{1,...,n^{ss}\}.
\end{align*}
Hence, $\mu_1,...,\mu_{n^r}$ is a nontrivial solution to the system
\begin{equation*}
    \mu_k r^k_{ij}=0,\quad \mu_k s^k_{pq}=0, \qquad i,j,p\in\{1,...,n^r\},\quad q\in\{1,...,n^{ss}\},
\end{equation*}
 where $[\der_i,\der_j]=r^k_{ij}\der_k$ and $[\der_p,\der'_q]=s^k_{pq}\der_k$ and $k$ ranges from $1$ to $n^r$. This completes the proof.
\end{proof}

\section{ General conditions for the existence of a Levi-Civita connection}\label{sec:gen.LC.conds}
As was showcased in the last section, for a general metric pre-calculus $(B_{\A},h)$ it is not guaranteed that there exists a metric anchor map $\varphi$ such that the resulting real metric calculus $(C_{\A}, h)$ is pseudo-Riemannian. In this section we shall derive necessary and sufficient criteria for the existence of a Levi-Civita connection for a given real metric calculus $\big((\A,\g_{\pi},M,\varphi),h\big)$ in the case where the module $M$ is finitely generated and projective. 

In order to state the main result of this section we utilize the characterization of real calculi where the module $M$ is projective used in \cite{tn:rc.finite.noncomm.spaces} to consider $M$ as a projection of a free $\A$-module of suitable rank. We go over the details of this characterization below.
Given a real  calculus $C_{\A}=(\A,\g_{\pi}\,M,\varphi)$ where $\dim\g=n$ and $M$ is projective, together with an invertible metric $h$ on $M$, we wish to investigate whether $(C_{\A},h)$ is pseudo-Riemannian, assuming that it is a real metric calculus. To do this, we note that if $\der_1,...,\der_n$ is a basis of $\g$, then $\varphi(\der_1),...,\varphi(\der_n)$ generate $M$ as an $\A$-module. We conclude that $\varphi$ can be viewed as a choice of generators $e_1,...,e_n$ of $M$, corresponding to the specific assignment $\der_i\mapsto e_i=\varphi(\der_i)$. Hence we may set $h_{ij}=h(e_i,e_j)$ and use Lemma \ref{lem:inv.metric} to guarantee that there exist $h^{ij}\in\A$ such that $e_k h^{kl}h_{li}=e_i$. 

Choosing an arbitrary basis $\hat{e}_1,...,\hat{e}_n$ of $\A^n$, we may create the module homomorphism $\phi:\A^n\rightarrow M$, defined by the formula
\begin{equation*}
    \phi(\hat{e}_i a^i)=e_i a^i.
\end{equation*}
Since $M$ is projective and $\phi$ is surjective, there exists a module homomorphism $\nu:M\rightarrow \A^n$ such that $\phi\circ\nu=\operatorname{id}_M$. Defining $p=\nu\circ\phi$, it is a standard fact that $p^2=p$ and that $p(\A^n)\simeq M$. From this discussion it becomes apparent that once a metric anchor map $\varphi:\der_i\mapsto e_i$ is given then it is always possible to give a projection $p:\A^n\rightarrow\A^n$ and a basis $\hat{e}_1,...,\hat{e}_n$ of $\A^n$ such that $e_i$  can be identified with $p(\hat{e}_i)=\hat{e}_k p^k_i$ for $i=1,...,n$. We note, in particular, that this identification means that the elements $h^{kl}\in\A$ can be assumed to satisfy the relationship $h^{kl}=p^k_m h^{ml}$ for all $m,l\in\{1,...,n\}$. Because, if $e_k h^{kl}h_{li}=e_i$ for $i=1,...,n$ one may set $\tilde{h} ^{kl}=p^k_m h^{ml}$, and then the identity $e_l p^l_i=e_i$ implies that
\begin{equation*}
    e_k\tilde{h}^{kl}h_{li}=e_k p^k_m h^{ml}h_{li}=e_m h^{ml}h_{li}=e_i;
\end{equation*}
we shall make this implicit assumption going forward.
\begin{example}\label{ex:proj.calculus.as.proj.of.free1}
    As a concrete example of the above procedure, let $\A=\Mat{2}$ and let $\g=\R\gen{D_1,D_2,D_3}=\mathfrak{su}(2)$, where $D_1,D_2$ and $D_3$ are given as in Example \ref{ex:su2}, i.e., 
\begin{equation*}
    D_1=\begin{pmatrix}
    0 & i\\
    i & 0
    \end{pmatrix},\quad D_2=\begin{pmatrix}
    0 & 1\\
    -1 & 0
    \end{pmatrix},\quad D_3=\begin{pmatrix}
    i & 0\\
    0 & -i
    \end{pmatrix},
\end{equation*}
with structure constants $f^k_{ij}$, given by 
\begin{align*}
    &(f^1_{12},f^2_{12},f^3_{12})=(0,0,-2)\\
    &(f^1_{13},f^2_{13},f^3_{13})=(0,2,0)\\
    &(f^1_{23},f^2_{23},f^3_{23})=(-2,0,0).
\end{align*}
As before, we let $\pi:\g\rightarrow\operatorname{Der}(\A)$ be the identification $\pi(D_i)=[D_i,\cdot]$.
As an $\A$-module we pick $M=\A$, with the obvious right action given by multiplication on the right. $M$ is clearly a free (and hence also projective) module, and as a metric we pick $h:M\times M\rightarrow \A$ given by $h(X,Y)=X^{\dagger}Y$. We describe the anchor maps $\varphi:\g\rightarrow M$ such that $\big((\A,\g_{\pi},M,\varphi),h\big)$ is a real metric calculus below, before characterizing $M=\A$ as a projection of $\A^3$.

In general, if $\varphi(D_k)=X_k\in M$ is to define an anchor map there have to exist matrices $Y^1, Y^2, Y^3$ such that 
$X_k Y^k=\One$, since otherwise the matrices $X_1, X_2, X_3$ clearly do not generate $\A$ as an $\A$-module; the matrices $Y^k$ are generally not uniquely determined by $X_k$, $k=1,2,3$. Since we are mainly interested in metric anchor maps $\varphi$, this immediately implies that $h_{ij}=h(X_i,X_j)=X_i^{\dagger}X_j=X_j^{\dagger} X_i=h_{ji}$ for $i,j=1,2,3$. It is straightforward to find the components of the inverse metric to be $h^{ij}=Y^i (Y^j)^{\dagger}$, as this yields
\begin{equation*}
    X_k h^{kl}h_{li}=X_k Y^k(Y^l)^{\dagger}X_l^{\dagger}X_i=(X_k Y^k)(X_l Y^l)^{\dagger}X_i=\One\cdot\One\cdot X_i=X_i,
\end{equation*}
as desired. We now construct a projection $p:\A^3\rightarrow \A^3$ such that $M=\A\simeq p(\A^3)$.

Let $\hat{e}_1=(\One,0,0)$, $\hat{e}_2=(0,\One,0)$ and $\hat{e}_3=(0,0,\One)$, making out a basis of $\A^3$. Let $\rho:\A^3\rightarrow M$ denote the module homomorphism given by $\rho(\hat{e}_k A^k)=X_k A^k$, which is an epimorphism since the matrices $X_k$ generate $\A$ as an $\A$-module. Next, we let $\nu:M\rightarrow\A^3$ be given by
$\nu(X)=(Y^1,Y^2,Y^3)X$; it is straightforward to verify that $\rho\circ\nu=\operatorname{id}_M$ due to the identity $X_k Y^k=\One$, and hence we get that our desired projection $p$ is given by $p=\nu\circ\rho$. More explicitly we find that
\begin{equation*}
    p(\hat{e}_i)=\nu\circ\rho(\hat{e}_i)=\nu(X_i)=(Y^1,Y^2,Y^3)X_i=\hat{e}_k Y^k X_i=\hat{e}_k p^k_i,
\end{equation*}
implying that the projection coefficients are given by $p^k_i=Y^k X_i$. It is straightforward to verify that that $p^k_m h^{ml}=h^{kl}$.
\end{example}
Given the above discussion, the following result can be used to determine whether a real metric calculus $\big((\A,\g_{\pi},M,\varphi),h\big)$ is pseudo-Riemannian whenever $M$ is a finitely generated projective module.

\begin{proposition}\label{prop:LC.cond}
Let $\big((\A,\g_{\pi}, p(\A^n),\varphi),h\big)$ be a real metric calculus where $\dim \g=n$ and where $p:\A^n\rightarrow\A^n$ is a projection. Let $\der_1,...,\der_n$ be a basis of $\g$, and let $\hat{e}_1,...\hat{e}_n$ be a basis of $\A^n$ such that $\varphi(\der_i)=e_i=p(\hat{e}_i)$ for $i=1,...,n$. Writing $h_{ij}=h(e_i,e_j)$ and $p(\hat{e}_ia^i)=\hat{e}_k p^k_i a^i$, then $\big((\A,\g_{\pi},p(\A^n),\varphi),h\big)$ is a pseudo-Riemannian calculus if and only if
\begin{equation}\label{eqn:LC.cond}
    p^k_l\der_i(p^l_j)=\Lambda^k_{il}(\delta^l_j\One-p^l_j),\quad i,j,k=1,...,n,
\end{equation}
where
\begin{equation*}
    \Lambda^k_{ij}=\frac{1}{2}h^{kl}\left(\der_i(h_{jl})+\der_j(h_{il})-\der_l(h_{ij})-h_{jq}f_{il}^q-h_{iq}f_{jl}^q+h_{lq}f_{ij}^q\right)
\end{equation*}
and $[\der_i,\der_j]= f^k_{ij}\der_k$.
\end{proposition}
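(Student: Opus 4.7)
The plan is to apply Proposition \ref{prop:koszul.formula}: the real metric calculus is pseudo-Riemannian if and only if there is an affine connection $\nabla$ on $\g\times p(\A^n)$ satisfying Koszul's formula $2h(\nabla_{\der_i}e_j,e_l)=K_{ijl}$, where $K_{ijl}$ abbreviates the Koszul right-hand side. The strategy is to identify the only viable candidate for $\nabla$ on the generators $e_j$, namely $\nabla_{\der_i}e_j:=e_k\Lambda^k_{ij}$, and to pin down (\ref{eqn:LC.cond}) as precisely the obstruction to this candidate extending to a well-defined affine connection on $M=p(\A^n)$.

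First I would record the basic structural facts about $\Lambda$. By construction $\Lambda^k_{ij}=\tfrac{1}{2}h^{kl}K_{ijl}$ is the Koszul coefficient contracted against the inverse metric from Lemma \ref{lem:inv.metric}, and the standing assumption $h^{kl}=p^k_m h^{ml}$ forces the automatic identity $\Lambda^k_{ij}=p^k_m\Lambda^m_{ij}$. Because any Levi-Civita connection is uniquely determined on the generators $e_1,\dots,e_n$ (Theorem \ref{thm:existence.and.uniqueness.of.LC}), the prescription $\nabla_{\der_i}e_j=e_k\Lambda^k_{ij}$ is the only one to be tested.

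The main step, and the principal obstacle, is determining when this prescription extends via the Leibniz rule to a well-defined affine connection on $M$. I would extend to $\nabla_{\der_i}(e_ja^j):=e_k\Lambda^k_{ij}a^j+e_j\der_i(a^j)$ and require that the result depend only on $m=e_ja^j$, not on the representative $(a^j)$. Passing to the free module via $e_j\leftrightarrow\hat{e}_rp^r_j$, the relation $e_ja^j=0$ is equivalent to $p^r_ja^j=0$ for every $r$; differentiating yields $p^r_j\der_i(a^j)=-\der_i(p^r_j)a^j$, and pushing the Leibniz expression through $p$ reduces well-definedness to the compact matrix identity $(L-D)(I-P)=0$ (for each fixed $i$), where $L^k_j=\Lambda^k_{ij}$, $D^k_j=\der_i(p^k_j)$ and $P^k_j=p^k_j$. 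Differentiating $P^2=P$ gives $DP+PD=D$, and combining this with $PL=L$ shows that $(L-D)(I-P)=0$ is equivalent to $L-LP=PD$, i.e.\ to $\Lambda^k_{ij}-\Lambda^k_{il}p^l_j=p^k_l\der_i(p^l_j)$, which is exactly (\ref{eqn:LC.cond}).

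Once (\ref{eqn:LC.cond}) is in hand, the Leibniz-extension defines a genuine affine connection on $M$, and the remaining task is to verify that it satisfies Koszul's formula. I would do this by a direct computation using the explicit form of $\Lambda$, the hermiticity of $h_{ij}$, the identity $h_{jk}p^k_l=h_{jl}$ (which follows from $e_kp^k_l=e_l$), and (\ref{eqn:LC.cond}) itself to absorb the projection deficit between $h^{lm}h_{mk}$ and $\delta^l_k$; the antisymmetry of the structure constants $f^k_{ij}$ produces the symmetrization $K_{ijk}+K_{ikj}=2\der_ih_{jk}$ that drives the metric-compatibility half of the check. Proposition \ref{prop:koszul.formula} then delivers the pseudo-Riemannian structure. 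The delicate part throughout is the bookkeeping that translates the non-uniqueness of representatives $(a^j)$ into constraints on the projection coefficients; (\ref{eqn:LC.cond}) is precisely the reconciliation between the Koszul formula and the projective structure.
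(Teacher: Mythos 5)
Your proposal is correct, and it reaches \eqref{eqn:LC.cond} by a genuinely different route than the paper. The paper never isolates a well-definedness problem: it uses that every connection on $p(\A^n)$ can be written as $p\circ\tilde{\nabla}$ for a connection $\tilde{\nabla}_i\hat{e}_j=\hat{e}_k\tilde{\Gamma}^k_{ij}$ on the free module (implicit, via the Leibniz rule applied to $e_j=e_kp^k_j$), writes the Koszul identity in coordinates as $p^q_l\tilde{\Gamma}^l_{ik}p^k_j+p^q_l\der_i(p^l_j)=\Lambda^q_{ij}$, and then \emph{eliminates} the free coefficients $\tilde{\Gamma}$ by multiplying on the right with $p^j_m$ and using $p^q_l\der_i(p^l_j)p^j_m=0$; for sufficiency it takes the explicit lift $\tilde{\Gamma}^q_{im}=\Lambda^q_{ij}p^j_m$, whose projection is automatically a well-defined connection, thereby sidestepping the issue you confront. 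You instead build the candidate intrinsically on $M$, so you must---and correctly do---prove well-definedness of the Leibniz extension: your reduction to $(L-D)(I-P)=0$, together with $DP+PD=D$ and the normalization $PL=L$ coming from $h^{kl}=p^k_mh^{ml}$, is sound, and it exhibits \eqref{eqn:LC.cond} as precisely the obstruction to the Koszul candidate descending to the projective module, which is arguably more conceptual than the paper's elimination argument (the two constructions yield the same connection, since $e_l\tilde{\Gamma}^l_{ik}p^k_j+e_l\der_i(p^l_j)=e_k\Lambda^k_{ij}$ under \eqref{eqn:LC.cond}). Two small caveats. First, in the necessity direction, Theorem \ref{thm:existence.and.uniqueness.of.LC} alone does not give $\nabla_ie_j=e_k\Lambda^k_{ij}$; you need the short computation that Koszul's formula together with $e_lp^l_m=e_m$ yields $h(\nabla_ie_j-e_k\Lambda^k_{ij},e_m)=0$ for all $m$, whence the claim follows from right-linearity of $h$ and injectivity of $\hat{h}$. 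Second, your closing Koszul verification is only sketched---the crux is the identity $K_{ijl}p^l_m=K_{ijm}$ absorbing the projection deficit---but the ingredients you name ($h_{ij}=h_{ji}$ hermitian, $h_{jk}p^k_l=h_{jl}$, the symmetrization $K_{ijk}+K_{ikj}=2\der_i(h_{jk})$, and \eqref{eqn:LC.cond} itself) are the right ones, and this is exactly the step the paper also compresses into ``hence $\nabla$ satisfies Koszul's formula,'' so you are at parity with the paper's level of detail there.
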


\begin{proof}
We begin by proving necessity of (\ref{eqn:LC.cond}) for there to exist a Levi-Civita connection.
Let $\tilde{\nabla}$ be an arbitrary connection on the free module $\A^n$ defined by
\begin{equation*}
    \tilde{\nabla}_i\hat{e}_j=\hat{e}_k\tilde{\Gamma}^k_{ij}.
\end{equation*}
Then $\nabla:=p\circ\tilde{\nabla}|_{p(\A^n)}$ is an affine connection on $\g\times p(\A^n)$ given by
\begin{equation*}
    \nabla_i e_j=p(\tilde{\nabla}_i \hat{e}_k p^k_j)=p(\hat{e}_l\tilde{\Gamma}^l_{ik}p^k_j+\hat{e}_k\der_i(p^k_j))=e_l(\tilde{\Gamma}^l_{ik}p^k_j+\der_i(p^l_j)).
\end{equation*}
Using the notation $\Lambda^k_{ij}$ as in the proposition statement 
to simplify expressions going forward, it is straightforward to verify that Koszul's formula (\ref{eqn:Koszul}) from Proposition \ref{prop:koszul.formula} implies that
\begin{align*}
    h(e_m,\nabla_i e_j)
    &=\frac{1}{2}\left(\der_i h_{jk}+\der_j h_{ik}-\der_k h_{ij}
    -h_{il}f^l_{jk}+h_{jl}f^l_{ki}+h_{kl}f^l_{ij}\right)=h(e_m,e_l \Lambda^l_{ij})
\end{align*}
which is equivalent to stating that
\begin{equation*}
    e_l(\tilde{\Gamma}^l_{ik}p^k_j+\der_i(p^l_j)-\Lambda^l_{ij})=\hat{e}_q p^q_l(\tilde{\Gamma}^l_{ik}p^k_j+\der_i(p^l_j)-\Lambda^l_{ij})=0.
\end{equation*}
In other words,
\begin{equation}\label{eqn:LC-conn.cond}
    p^q_l\tilde{\Gamma}^l_{ik}p^k_j+p^q_l\der_i(p^l_j)=p^q_l\Lambda^l_{ij}=\Lambda^q_{ij},
\end{equation}
since $h^{qr}=p^q_l h^{lr}$.
Multiplying this equation by $p^j_m$ from the right, we find that
\begin{equation*}
    p^q_l\tilde{\Gamma}^l_{ik}p^k_m=\Lambda^q_{ij} p^j_m,
\end{equation*}
since $p^q_l\der_i(p^l_j)p^j_m=0$ for any projection $p$.
If we use this in equation (\ref{eqn:LC-conn.cond}) we see that
\begin{equation*}
    \Lambda^q_{ik}p^k_j+p^q_l\der_i(p^l_j)=\Lambda^q_{ij},
\end{equation*}
which is equivalent to
\begin{equation*}
    p^q_l\der_i(p^l_j)=\Lambda^q_{ik}(\delta^k_j\One-p^k_j),
\end{equation*}
i.e., Equation (\ref{eqn:LC.cond}). 

Proving sufficiency of the above condition, let
\begin{equation*}
    \tilde{\Gamma}^q_{im}=\Lambda^q_{ij}p^j_m,
\end{equation*}
and define the connection $\nabla:\g\times p(\A^n)\rightarrow p(\A^n)$ by
\begin{equation*}
    \nabla_i e_j=p(\hat{e}_l\tilde{\Gamma}^l_{ik}p^k_j+\hat{e}_k\der_i(p^k_j))=e_l(\tilde{\Gamma}^l_{ik}p^k_j+\der_i(p^l_j)).
\end{equation*}
Then
\begin{align*}
    h(e_m,\nabla_{i}e_j)&=h(e_m,e_l(\tilde{\Gamma}^l_{ik}p^k_j+\der_i(p^l_j)))\\
    &=h(e_m,\hat{e}_q p^q_r p^r_l(\tilde{\Gamma}^l_{ik}p^k_j+\der_i(p^l_j)))=h(e_m,e_r p^r_l(\Lambda^l_{ik}p^k_j+\der_i(p^l_j)))\\
    &=h(e_m,e_r (\Lambda^r_{ik}p^k_j+p^r_l\der_i(p^l_j)))=h(e_m,e_r (\Lambda^r_{ik}p^k_j+\Lambda^r_{ik}(\delta^k_j\One-p^k_j)))\\
    &=h(e_m,e_r\Lambda^r_{ik}),
\end{align*}
and hence $\nabla$ satisfies Koszul's formula, implying that $\big((\A,\g_{\pi},M,\varphi),h\big)$ is pseudo-Riemannian by Proposition \ref{prop:koszul.formula}. The statement follows.
\end{proof}

\begin{example}\label{ex:free.real.calculus}
Besides for checking whether a given real metric calculus over a projective module is pseudo-Riemannian, Proposition \ref{prop:LC.cond} can be useful when considering anchor maps that share specific properties. As an example of this, we consider the metric pre-calculus $(B_{\A},h)=\big((\A,\g_{\pi},\A^n),h\big)$ where $\dim\g=n$. Given a basis $\der_1,...,\der_n$ of $\g$, let $\varphi:\g\rightarrow \A^n$ be a metric anchor map such that $\varphi(\der_1),...,\varphi(\der_n)$ is a basis of $\A^n$. $\A^n$ can be considered as a projection of itself under the trivial projection $p=\operatorname{id}_{\A^n}$, with projection coefficients $p^k_l=\delta^k_l\One$ in the basis $\varphi(\der_1),...,\varphi(\der_n)$. Inserting these into Equation (\ref{eqn:LC.cond}) of Proposition \ref{prop:LC.cond} one finds that equality holds, since
\begin{equation*}
    p^k_l\der_i(p^l_j)=\delta^k_l\der_i(\delta^l_j\One)=0
\end{equation*}
and 
\begin{equation*}
    \Lambda^k_{il}(\delta^l_j\One-p^l_j)=\Lambda^k_{il}(\delta^l_j\One-\delta^l_j\One)=0
\end{equation*}
are trivially true for all $i,j,k\in\{1,...,n\}$. Hence $\big((\A,\g_{\pi},\A^n,\varphi),h\big)$ is pseudo-Riemannian whenever $\varphi$ is a metric anchor map constructed as above. 

Structures of the form $\big((\A,\g_{\pi},\A^n,\varphi),h\big)$ were studied in detail in \cite{atn:nc.minimal.embeddings} and \cite{tn:rc.finite.noncomm.spaces}, where they are referred to as \emph{free real metric calculi}, and the above argument using Proposition \ref{prop:LC.cond} is an alternative proof of Proposition 5.3 in \cite{atn:nc.minimal.embeddings}, which more clearly highlights that the result is a consequence of a general fact about real calculi where the module is projective.
\end{example}

\begin{example}\label{ex:matrix.free.rank1.nonexistent.LC}
We now expand on Example \ref{ex:proj.calculus.as.proj.of.free1}, where $\A=M=\Mat{2}$, and $\g=\mathfrak{su}(2)$ 
with structure constants $f^k_{ij}$, given by 
\begin{align*}
    &(f^1_{12},f^2_{12},f^3_{12})=(0,0,-2)\\
    &(f^1_{13},f^2_{13},f^3_{13})=(0,2,0)\\
    &(f^1_{23},f^2_{23},f^3_{23})=(-2,0,0).
\end{align*}
As a metric we had $h:M\times M\rightarrow \A$ given by $h(X,Y)=X^{\dagger}Y$. In Example \ref{ex:proj.calculus.as.proj.of.free1} we considered general anchor maps $\varphi:D_i\mapsto X_i$, and here we make the specific choice $X_1=\One_2$, $X_2=X_3=0$ and $Y^1=\One_2$, $Y^2=Y^3=0$. As outlined earlier, this implies that the coefficient $p^k_i=Y^k X_i$ is nonzero if and only if $k=i=1$, and $p^1_1=\One$. Moreover, setting $h_{ij}=h(\varphi(D_i),\varphi(D_j))$ for $i,j\in\{1,2,3\}$, it is straightforward to verify that $h_{11}=\One_2$ and that $h_{ij}=0$ for all other choices of $i$ and $j$. Plugging the projection coefficients into Equation (\ref{eqn:LC.cond}), the left-hand side becomes $p^k_l\der_i(p^l_j)=0$ for all $i,j,k$. Moreover, we get that the right-hand side of Equation (\ref{eqn:LC.cond}) reduces to
\begin{equation*}
    \Lambda^k_{ij}-\Lambda^k_{il}p^l_j=\Lambda^k_{ij}(\One-Y^1 X_j)=\Lambda^k_{ij}
\end{equation*}
if $j\neq 1$, and zero otherwise.
Calculating $\Lambda^1_{23}$ explicitly, we find that 
\begin{align*}
    \Lambda^1_{23}&=\frac{1}{2}h^{1l}\left(\der_2(h_{3l})+\der_3(h_{2l})-\der_l(h_{23})-h_{3q}f_{2l}^q-h_{2q}f_{3l}^q+h_{lq}f_{23}^q\right)\\
    &=\frac{1}{2}\One\left(0+0-0-0-0+\One f_{23}^1\right)=-\One\neq 0.
\end{align*}
Hence, with the specific choice of metric anchor map $\varphi$ given by $X_1=\One$ and $X_2=X_3=0$, we find that the resulting real metric calculus $\big((\A,\g_{\pi},\A,\varphi),h\big)$ is not pseudo-Riemannian.
\end{example}

The complications that arise in the above example, where it is difficult to determine whether there exists a metric anchor map that makes the given metric pre-calculus pseudo-Riemannian, are ultimately related to the structure of the Lie algebra $\g=\mathfrak{su}(2)$. Some of these problems vanish in cases where $\g$ is abelian, in part due to the fact that the expression for $\Lambda^k_{ij}$ is simplified by the fact that terms of the form $f^{k}_{ij}h_{jl}$ vanish when the structure constants are zero.

\begin{proposition}\label{prop:LC.free.mod.abelian}
Let $\A$ be a unital $^*$-algebra, $\g\subseteq\operatorname{Der}(\A)$ be an abelian Lie algebra of hermitian derivations with basis $\{\der_i\}_{i=1}^n$ and $M=\A^m$ for $m \leq \dim \g$. If $h:M\times M\rightarrow\A$ is an invertible metric such that there is a basis $\hat{e}_1,...\hat{e}_m$ of $\A^m$ satisfying
\begin{equation*}
    h(\hat{e}_i,\hat{e}_j)=h(\hat{e}_i,\hat{e}_j)^*\text{ and } \der(h(\hat{e}_i,\hat{e}_j))=0,\quad i,j\in\{1,...,m\},\quad \der\in\g,
\end{equation*}
then the metric anchor map $\varphi:\g\rightarrow M$ given by $\varphi(\der_i)=\hat{e}_i$ for $i=1,...,m$ and $\varphi(\der_j)=0$ for $j=m+1,...,n$ is such that
$\big((\A,\g_{\pi}\,M,\varphi),h\big)$ is pseudo-Riemannian.
\end{proposition}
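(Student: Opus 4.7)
The plan is to apply Proposition \ref{prop:LC.cond} after realizing $M=\A^m$ as a projection of $\A^n$. Extend the given basis $\hat{e}_1,\dots,\hat{e}_m$ of $\A^m$ to a basis $\tilde{e}_1,\dots,\tilde{e}_n$ of $\A^n=\A^m\oplus\A^{n-m}$ in the obvious way, and define the projection $p\colon\A^n\to\A^n$ by $p(\tilde{e}_i)=\tilde{e}_i$ for $i\leq m$ and $p(\tilde{e}_j)=0$ for $j>m$. Then $p(\A^n)\simeq \A^m=M$, the anchor map satisfies $\varphi(\der_i)=p(\tilde{e}_i)$ for every $i$, and the projection coefficients are
$$p^k_i=\delta^k_i\One \text{ if } i\leq m,\qquad p^k_i=0 \text{ if } i>m.$$
Before checking the Levi-Civita condition I would verify that $\varphi$ is a metric anchor map: it generates $M$ because $\hat{e}_1,\dots,\hat{e}_m$ is a basis of $\A^m$, and $h(\varphi(\der_i),\varphi(\der_j))$ is hermitian by hypothesis when both indices are $\leq m$ and is zero (hence trivially hermitian) otherwise.

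Next I would look at each side of Equation (\ref{eqn:LC.cond}) separately. The left-hand side $p^k_l\der_i(p^l_j)$ vanishes identically, since every $p^l_j$ is either $\One$ or $0$ and hence annihilated by each derivation. For the right-hand side, split on $j$: when $j\leq m$ the factor $\delta^l_j\One-p^l_j$ vanishes for every $l$, so the right-hand side is automatically zero. When $j>m$ we have $p^l_j=0$ for every $l$, so $\delta^l_j\One-p^l_j=\delta^l_j\One$ and the right-hand side collapses to $\Lambda^k_{ij}$. Hence the condition of Proposition \ref{prop:LC.cond} reduces to showing $\Lambda^k_{ij}=0$ whenever $j>m$.

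This is where the two hypotheses on the basis enter. Because $\g$ is abelian, all structure constants $f^q_{ij}$ vanish and the formula for $\Lambda^k_{ij}$ simplifies to
$$\Lambda^k_{ij}=\tfrac{1}{2}h^{kl}\bigl(\der_i(h_{jl})+\der_j(h_{il})-\der_l(h_{ij})\bigr).$$
For $j>m$ the entries $h_{jl}=h(\varphi(\der_j),\varphi(\der_l))$ and $h_{ij}=h(\varphi(\der_i),\varphi(\der_j))$ are zero, because $\varphi(\der_j)=0$. The remaining term $\der_j(h_{il})$ also vanishes: if either $i>m$ or $l>m$ then $h_{il}=0$, and if $i,l\leq m$ then $h_{il}=h(\hat{e}_i,\hat{e}_l)$ is annihilated by every $\der\in\g$ by the second hypothesis on the basis. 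Thus $\Lambda^k_{ij}=0$ for $j>m$, (\ref{eqn:LC.cond}) holds, and Proposition \ref{prop:LC.cond} yields that $\big((\A,\g_{\pi},M,\varphi),h\big)$ is pseudo-Riemannian.

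I do not expect a serious obstacle: once the projection realization of $\A^m$ inside $\A^n$ is set up so that the anchor map matches $p$ applied to the chosen basis of $\A^n$, the rest is bookkeeping through Koszul's formula. The only point that requires care is keeping track of which of the two hypotheses (hermiticity of the $h(\hat{e}_i,\hat{e}_j)$ versus their invariance under $\g$) is doing what work: hermiticity ensures we have a real metric calculus in the first place, while $\der$-invariance together with abelianness kills $\Lambda^k_{ij}$ in the cases not automatically handled by the projection coefficients.
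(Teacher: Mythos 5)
Your proposal is correct and follows essentially the same route as the paper: realize $\A^m$ as the image of the coordinate projection $p$ on $\A^n$, observe that the projection coefficients are constant so the left-hand side of Equation (\ref{eqn:LC.cond}) vanishes, and use abelianness together with the $\der$-invariance of the $h(\hat{e}_i,\hat{e}_j)$ to kill $\Lambda^k_{ij}$. The only (harmless) difference is that you split off the case $j>m$ and show $\Lambda^k_{ij}=0$ only there, while the paper notes that $\der(h_{ij})=0$ for all indices (trivially when an index exceeds $m$) and concludes $\Lambda^k_{ij}=0$ for all $i,j,k$ at once.
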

\begin{proof}
Let $n=\dim\g$.
We begin by extending $\hat{e}_1,...,\hat{e}_m$ to a basis $\{\tilde{e}_k\}_1^n$ of $\A^n$ so that $\iota(\hat{e}_i)=\tilde{e}_i$ for $i=1,...,m$, where $\iota:\A^m\rightarrow\A^n$ denotes the inclusion map. Next, let $p:\A^n\rightarrow\A^n$ be the projection such that $p(\tilde{e}_i)=\tilde{e}_i$ for $i=1,...,m$ and $p(\tilde{e}_j)=0$ for $j=m+1,...,n$. By construction, $p(\A^n)\simeq \A^m$ under the obvious isomorphism $\tilde{e}_i\mapsto\hat{e}_i$ for $i=1,...,m$, and if we write $p(\tilde{e}_i)=\tilde{e}_k p^k_i$, then $p^k_i=\delta^k_i\One$ if $i\leq m$ and $p^k_j=0$ if $j>m$ for $k=1,...,n$.

Letting $\varphi:\g\rightarrow\A^m$ be the given metric anchor map, it follows that
\begin{equation*}
    \der(h_{ij})=\der(h(\varphi(\der_i),\varphi(\der_j)))=\der(h(\hat{e}_i,\hat{e}_j))=0
\end{equation*} 
for all $\der\in\g$. Moreover, since $p^k_i$ is also either $0$ or $\One$, it follows that for all $i,j,k$,
\begin{align*}
     p^k_l\der_i(p^l_j)&=0,\\
     \Lambda^k_{ij}&=\frac{1}{2}h^{kl}\left(\der_i(h_{jl})+\der_j(h_{il})-\der_l(h_{ij})-h_{jq}f_{il}^q-h_{iq}f_{jl}^q+h_{lq}f_{ij}^q\right)=0,
\end{align*}
since $\g$ being abelian implies that all structure constants $f^k_{ij}=0$. Hence, Proposition \ref{prop:LC.cond} can be directly applied to verify that $\big((\A,\g_{\pi}\,M,\varphi),h\big)$ is indeed pseudo-Riemannian, as desired.
\end{proof}

\section{Summary}\label{sec:summary}
The over-arching goal of this article has been to investigate the existence of Levi-Civita connections in the context of real calculi over projective modules and what effects the choice of anchor map may have in this regard. Given a metric pre-calculus, the general problem of determining whether there is a metric anchor map making the pre-calculus into a pseudo-Riemannian calculus is non-trivial. As an important step in the overall process of achieving a full understanding of the problem, the current article confirms that there are cases where no Levi-Civita connection exists for any metric anchor map (see Example \ref{ex:su2}), even if one only considers projective modules.

More broadly, the results in Section \ref{sec:fin.dim.modules} (as well as Example \ref{ex:matrix.free.rank1.nonexistent.LC}) indicate that $\g$ being semisimple acts as an obstruction to the existence of a Levi-Civita connection in several cases. In fact, in the case where $\g$ is semisimple and of dimension $n$, the only scenario where existence of a metric anchor map such that the resulting structure is a pseudo-Riemannian calculus has been verified is if $M\simeq \A^n$, as was shown in Example \ref{ex:free.real.calculus}; at the time of writing, if $\dim \g=n$ and $\g$ is semisimple we are not aware of any other example of a metric pre-calculus $\big((\A,\g_{\pi},M),h\big)$ where $M$ is projective for which there exists a metric anchor map $\varphi:\g\rightarrow M$ such that $\big((\A,\g_{\pi},M,\varphi),h\big)$ is pseudo-Riemannian.

In contrast to the case where $\g$ is semisimple, in cases where $\g$ is abelian all examples considered of metric pre-calculi $\big((\A,\g_{\pi},M),h\big)$ where $M$ is projective are such that there exists a metric anchor map $\varphi$ such that $\big((\A,\g_{\pi},M,\varphi),h\big)$ is pseudo-Riemannian (for instance, consider \cite{aw:curvature.three.sphere}, \cite{al:noncommutative.cylinder}, \cite{atn:nc.minimal.embeddings}). For the moment it is not clear whether there exists any counterexamples when $\g$ is abelian.

In conclusion, several important steps have been taken in understanding of the question of existence of the Levi-Civita connection when given geometric data in the form of a metric pre-calculus. In particular it is worth noting the substantial progress that has been achieved in the case when $\A=\Mat{N}$, and
in future work we hope to give a full characterization of metric pre-calculi for which a Levi-Civita connection exists.

\bibliographystyle{alpha} %Choose style for bibliography
\bibliography{references} %Print bibliography

\end{document}